\newtheorem{theorem}{Theorem}
\newtheorem{prop}[theorem]{Proposition}
\newtheorem{lemma}[theorem]{Lemma}
\newtheorem{claim}{Claim}
\newtheorem{fact}{Fact}
\newtheorem*{cor}{Corollary}
\theoremstyle{definition}
\newtheorem{definition}{Definition}
\newtheorem*{rema}{Remark}
\newtheorem*{defin}{Definition}
\newtheorem*{notat}{Notation}
\numberwithin{equation}{section}
\newcommand{\Z}{{\mathbb Z}}
\newcommand{\R}{{\mathbb R}}
\def\wt{\widetilde }
\newcommand{\im}{\mathop{\mathrm{im}}\nolimits}
\newcommand{\rk}{\operatorname{rank}}
\def\Cob{\text{\rm Cob}^{SO}}
\def\Coker{\text{\rm Coker}\,}
\newcommand{\Prim}{{\text{\rm Prim}^{SO}}}
\newcommand{\down}{{\bm{\downarrow}}}
\newcommand{\up}{{\bm{\uparrow}}}
\newcommand{\Diff}{{\operatorname{Diff}}}
\begin{document}

\renewcommand{\thefootnote}{\arabic{footnote}}
\setcounter{footnote}{0}

\title{Singularities and stable homotopy groups of spheres. I}

\author{by\\
\\
Csaba Nagy, Andr\'as Sz\H{u}cs and Tam\'as Terpai}

\date{}

\maketitle


\renewcommand{\thefootnote}{}
\footnotetext{{\it Acknowledgement:} The authors were supported by the National Research, Development and Innovation Office NKFIH (OTKA) Grants NK 112735 and K 120697, the second and third authors were partially supported by ERC Advanced Grant LDTBud.}


\renewcommand{\thefootnote}{\arabic{footnote}}
\setcounter{footnote}{0}

\def\thesection{\S\arabic{section}}

\begin{abstract}
We establish an interesting connection between Morin singularities and stable homotopy groups of spheres. We apply this connection to computations of cobordism groups of certain singular maps. The differentials of the spectral sequence computing these cobordism groups are given by the composition multiplication in the ring of stable homotopy groups of spheres.
\end{abstract}


\section{Introduction}
We are considering locally stable smooth maps of $n$-dimensional manifolds into $(n+1)$-dimensional manifolds with the simplest, corank $1$ singularities (those where the rank of the differential map is at most $1$ less than the maximum possible). These singularities, called Morin singularities, form a single infinite family, with members denoted by symbols $\Sigma^0$ (nonsingular points), $\Sigma^{1,0}$ (fold points), $\Sigma^{1,1,0}=\Sigma^{1_2}$ (cusp points), $\dots$, $\Sigma^{1,\underset{j}{\dots},1,0}=\Sigma^{1_j}$, $\dots$ (see \cite{M}). A map that only has singularities $\Sigma^{1_j}$ with $j\leq r$ is called a $\Sigma^{1_r}$-map, and we are interested in calculating the cobordism groups of such maps. Two $\Sigma^{1_r}$-maps with the same target manifold $P$ are ($\Sigma^{1_r}$-)cobordant if there exists a $\Sigma^{1_r}$-map into $P\times [0,1]$ from a manifold with boundary whose boundary is the disjoint union of those two maps. Unless specified otherwise maps between manifolds will be assumed to be cooriented (equipped with an orientation of the virtual normal bundle), Morin and of codimension $1$ (that is, the dimension of the target is $1$ greater than the dimension of the source).
\par

The cobordism group of fold maps of oriented $n$-manifolds into $\R^{n+1}$ -- denoted by $\Cob\Sigma^{1,0}(n+1)$ -- was computed in \cite{Sz3} completely, while that of cusp maps, $\Cob\Sigma^{1,1,0}(n+1)$, only modulo $2$- and $3$-torsion.\footnote{These groups were denoted in \cite{Sz3} by ${\rm Cob}\Sigma^{1,0}(n)$ and ${\rm Cob}\Sigma^{1,1}(n)$, respectively; note the shifted dimensions.}
Here we compute the $3$-torsion part (up to a group extension).
We shall also consider a subclass of such maps, the so-called prim (projection of immersion) cusp maps. These are the cusp maps with trivial and trivialized kernel bundle of the differential over the set of singular points. The cobordism group of prim fold and cusp maps of oriented $n$-manifolds to $\R^{n+1}$ will be denoted by $\Prim\Sigma^{1,0}(n+1)$ and $\Prim\Sigma^{1,1,0}(n+1)$ respectively. We shall compute these groups up to a group extension, and in the case of cusp maps up to $2$-primary torsion.

\section{Notation and formulation of the result}\label{section:notation}

We shall denote by $\pi^s(n)$ the $n$th stem, that is,
$$
\pi^s(n) = \underset{q \to \infty}{\lim} \pi_{n+q}(S^q).
$$
Denote by $\mathcal G$ the direct sum $\bigoplus\limits_{n} \pi^s(n)$. Recall that $\mathcal G$ is a ring, with multiplication $\circ$ defined by the composition (see \cite{To}). This product is skew commutative: for homogeneous elements $x$ and $y$ of $\mathcal G$
$$
x \circ y = (-1)^{\dim x \cdot \dim y} y \circ x.
$$
Recall that $\pi^s(3) \cong \Z_3 \oplus \Z_8$; the standard notation for the generator of the $\Z_3$ part is $\alpha_1$. By a slight abuse of notation we shall also denote by $\alpha_1$ the group homomorphism of $\mathcal G$ defined by left multiplication by $\alpha_1$, i.e. $\alpha_1(g) =\alpha_1 \circ g$ for any $g \in \mathcal G$.
\par
To formulate our results more compactly, we use the language of Serre classes of abelian groups \cite{S}. In particular, we will denote by $\mathcal C_2$ the class of finite $2$-torsion groups and $\mathcal C_{\{2,3\}}$ will denote the class of finite groups of order a product of powers of $2$ and $3$. Given a Serre class $\mathfrak C$, we call a homomorphism $f:A\to B$ a $\mathfrak C$-isomorphism if both $\ker f \in \mathfrak C$ and $\Coker f \in \mathfrak C$. Two groups are considered to be isomorphic modulo $\mathfrak C$ (denoted by $\underset{\mathfrak C}{\cong}$) if there exists a chain of $\mathfrak C$-isomorphisms that connects them. For example, isomorphism modulo $\mathcal C_{\{2,3\}}$ is isomorphism modulo the $2$-primary and $3$-primary torsions. A complex $\dots \overset{d_{m+1}}{\to} C_m \overset{d_m}{\to} \dots$ of maps is called $\mathfrak C$-exact if both $\im (d_m \circ d_{m+1})$ and $\ker d_m/(\ker d_m \cap \im d_{m+1})$ belong to $\mathfrak C$ for all $m$.

\begin{theorem}\label{th:1}
There is a $\mathcal C_2$-exact sequence
$$
\aligned
0 \to \Coker\bigl(\alpha_1 : \pi^s(n - 3) &\to \pi^s(n)\bigr) \to \Cob \Sigma^{1,1,0}(n+1) \to\\
 &\to\ker\bigl(\alpha_1 : \pi^s(n - 4) \to \pi^s(n - 1)\bigr) \to 0.
 \endaligned
$$
\end{theorem}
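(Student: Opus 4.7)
The plan is to use the Sz\H{u}cs classifying space $X\Sigma^{1,1,0}$ for cooriented cusp maps, satisfying $\Cob\Sigma^{1,1,0}(n+1)\cong\pi_{n+1}(X\Sigma^{1,1,0})$, and its analogue $X\Sigma^{1,0}$ for fold maps. The inclusion $X\Sigma^{1,0}\hookrightarrow X\Sigma^{1,1,0}$ has as its cofibre the Thom space $T\xi$, where $\xi$ is the virtual normal bundle over the classifying space of the stratum of $\Sigma^{1,1}$-points. That stratum has codimension $3$ in the target, and after fixing coorientations its isotropy group is, modulo $\mathcal C_2$, the cyclic group $\Z/3$; a direct computation then yields $\pi_{n+1}(T\xi)\underset{\mathcal C_2}{\cong}\pi^s(n-3)$. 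Combining this with the identification $\pi_{n+1}(X\Sigma^{1,0})\underset{\mathcal C_2}{\cong}\pi^s(n)$ coming from \cite{Sz3}, the Puppe sequence of the cofibration takes, modulo $\mathcal C_2$, the form
$$
\pi^s(n-3)\xrightarrow{\delta_n}\pi^s(n)\to\Cob\Sigma^{1,1,0}(n+1)\to\pi^s(n-4)\xrightarrow{\delta_{n-1}}\pi^s(n-1).
$$

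Once both connecting homomorphisms $\delta_n$ and $\delta_{n-1}$ are identified with left composition by $\alpha_1\in\pi^s(3)$, the desired short exact sequence is extracted formally as $0\to\Coker\delta_n\to\Cob\Sigma^{1,1,0}(n+1)\to\ker\delta_{n-1}\to 0$. Establishing this identification is the technical core of the proof and the concrete realization of the ``interesting connection'' between Morin singularities and stable homotopy groups announced in the abstract. Geometrically, $\delta$ is induced by the attaching map of the top cell of the $\Sigma^{1,1}$-stratum onto the $\Sigma^{1,0}$-stratum of the classifying space; by dimension counting its stable homotopy class lives in $\pi^s(3)$, and the task is to show that this class is precisely the $3$-primary generator $\alpha_1$ (and not zero or a $2$-torsion element). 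A natural strategy is to exhibit an explicit family of cusp unfoldings whose boundary behaviour realizes a representative of $\alpha_1$, e.g.\ via a Hopf-like construction analogous to Toda's realization of $\alpha_1$ from a $3$-fold covering $S^3\to L^3$.

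The main obstacle is exactly this last identification $\delta=\alpha_1\circ(-)$: it requires a careful unfolding analysis of the cusp singularity together with precise tracking of orientations and preferred generators through the classifying-space construction. The preliminary calculation of $\pi_{n+1}(T\xi)$ and its identification with $\pi^s(n-3)$ must be carried out compatibly with this geometric description of $\delta$, so that the end result is literally left-composition with $\alpha_1$ and not merely a map expressible in terms of $\alpha_1$ up to a unit. Once the attaching map is pinned down, the passage from the long to the short exact sequence is formal, and the $\mathcal C_2$-exactness is preserved throughout because all the ambiguity introduced by the identifications above is concentrated in the $2$-primary part.
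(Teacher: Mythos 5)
Your overall skeleton is the same as the paper's: the paper filters the classifying space as $X_0\subset X_1\subset X_2$ and runs the associated spectral sequence, which modulo $\mathcal C_2$ degenerates to exactly your five-term sequence, because the fold-stratum term $\pi_{*}(X_1,X_0)\cong\pi^s_*(S^2\R P^\infty)$ lies entirely in $\mathcal C_2$ and the cusp-stratum term $\pi_{n+1}(X_2,X_1)\cong\pi^s_{n+1}(S^3(\R P^\infty/\R P^1))$ is $\mathcal C_2$-isomorphic to $\pi^s(n-4)$. (Two bookkeeping slips in your prose: the image of the cusp stratum has codimension $5$ in the target, not $3$, its symmetry group contributes only $2$-primary information — no $\Z/3$ isotropy appears — and consequently $\pi_{n+1}(T\xi)\underset{\mathcal C_2}{\cong}\pi^s(n-4)$, not $\pi^s(n-3)$; your displayed sequence is nevertheless the correct one.)

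The genuine gap is the step you yourself flag as the ``technical core'': identifying the connecting homomorphism with left composition by $\alpha_1$. You neither prove that $\delta$ is left composition by a \emph{fixed} element of $\pi^s(3)$ (modulo the $2$-primary ambiguity), nor compute that element; the proposed ``Hopf-like construction analogous to Toda's realization of $\alpha_1$'' is not carried out and is not how the paper proceeds. The paper does this in two separate steps. First, a geometric argument (Claim 1) shows that the differential applied to a class $u$ is represented by $d^2(\sigma_2)\circ u$, where $\sigma_2$ is the normal form of the cusp: one uses the product structure of a tubular neighbourhood of the cusp stratum, $f|_T=(f|_{\Sigma^{1,1,0}(f)})\times\sigma_2$, together with the Compression Theorem to push the resulting boundary immersion back to $\partial\R^{n+5}_+$. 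Second, Lemma 3 pins down $d^2(\sigma_2)\in\pi^s(3)/\Z_2\cong\Z_{12}$ as an element of order $6$ by a diagram chase reducing the question to the minimal positive algebraic number of cusps of a prim cusp map of a closed oriented $4$-manifold into $\R^5$, which is $6$ by \cite{Sz1}; an order-$6$ element of $\Z_{12}$ has $3$-primary part $\pm\alpha_1$. Without both of these inputs (or substitutes for them), the passage from your five-term sequence to the statement of the theorem does not go through.
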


\begin{rema}
Recall that in \cite{Sz3} it was shown that
$$
\Cob \Sigma^{1,1,0}(n+1) \underset{\mathcal C_{\{2,3\}}}{\cong} \pi^s(n) \oplus \pi^s(n - 4)
$$
(in particular $\Cob\Sigma^{1,1,0}(n+1)$ is finite unless $n=0$ or $n=4$, when its rank is $1$). Since $\alpha_1$ is a homomorphism of order $3$, Theorem \ref{th:1} is compatible with this result and determines the $3$-primary part of $\Cob \Sigma^{1,1}(n+1)$ up to a group extension.
\end{rema}

\begin{rema}
Recall (\cite[Chapter XIV]{To}) the first few groups $\pi^s(n)$ and the standard names of the generators of the $3$-primary components (with the exception of $\alpha_3$, which is denoted by $\alpha'_3$ in \cite{To}):
\par
\hskip-1.4cm
\begin{tabular}{l|c|c|c|c|c|c|c|c|c|c|c|c}
$n$&0&1&2&3&4&5&6&7&8&9&10&11\\
\hline
$\pi^s(n)$& $\Z$ & $\Z_2$ & $\Z_2$ & $\Z_{24}$ & $0$ & $0$ & $\Z_2$ & $\Z_{240}$ & $\Z_2^2$ & $\Z_2^3$ & $\Z_6$ & $\Z_{56} \times \Z_9$ \\
\hline
$\left(\pi^s(n)\right)_3$& $\Z$ & & & $\Z_3\langle \alpha_1 \rangle$ & & & & $\Z_3\langle \alpha_2 \rangle$ & & & $\Z_3 \langle \beta_1 \rangle$ & $\Z_9\langle \alpha_3 \rangle$\\
\end{tabular}
Here and later we denote by $(G)_3$ the $3$-primary part\footnote{the quotient by the subgroup of torsion elements with orders coprime to $3$} of the abelian group $G$, omitting the parentheses when this does not cause confusion.
\par
For $n\leq 11$ the only occasion when the homomorphism $\alpha_1$ is non-trivial on the $3$-primary part is the epimorphism $\pi^s(0) \to \pi^s(3)$ (the homomorphism $\pi^s(7) \to \pi^s(10)$ is trivial modulo $3$ as a consequence of \cite[Lemma 13.8.]{To} and the fact that $(\pi^s(10))_3 \cong \Z_3$). Hence $\left(\Cob\Sigma^{1,1,0}(n+1)\right)_3$ fits into the short exact sequence $0 \to \left(\pi^s(n)\right)_3 \to \left(\Cob\Sigma^{1,1,0}(n+1)\right)_3 \to \left(\pi^s(n-4)\right)_3 \to 0$ for $n\leq 11$, $n\neq 3$.
\end{rema}

\section{Elements of stable homotopy groups of spheres arising from singularities}\label{section:singular}

\subsection{Representing elements of $\mathcal G$}

The following is a well-known corollary of the combination of the Pontryagin-Thom isomorphism and the Smale-Hirsch immersion theory:

\begin{fact}\label{fact:ImmFr}
The cobordism group of framed immersions of $n$-manifolds into $\R^{n+k}$ is isomorphic to $\pi^s(n)$ for any $k \geq 1$.
\end{fact}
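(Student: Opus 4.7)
The plan is to combine the Pontryagin--Thom construction with Hirsch--Smale immersion theory.

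Let $I_k(n)$ denote the stated cobordism group. Composition with the standard inclusion $\R^{n+k}\hookrightarrow\R^{n+k+1}$, together with the trivial extension of the normal framing by the last basis vector, defines a stabilisation homomorphism $\sigma_k\colon I_k(n)\to I_{k+1}(n)$.

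\emph{Stable range.} When $k>n+1$, transversality arguments make every framed immersion into $\R^{n+k}$ regularly homotopic to a framed embedding, and every framed immersed cobordism to a framed embedded one. Consequently $I_k(n)$ equals the cobordism group of framed embedded closed $n$-submanifolds of $\R^{n+k}$, which is $\pi_{n+k}(S^k)=\pi^s(n)$ by the classical Pontryagin--Thom construction.

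\emph{Isomorphisms $\sigma_k$ for $k\geq 1$.} By Hirsch's theorem, regular-homotopy classes of immersions $M^n\looparrowright\R^{n+k}$ correspond bijectively to homotopy classes of bundle monomorphisms $TM\to\epsilon^{n+k}$; with a normal framing this becomes a bundle isomorphism $TM\oplus\epsilon^k\xrightarrow{\sim}\epsilon^{n+k}$. The map $\sigma_k$ corresponds on the bundle-theoretic side to the canonical stabilisation of the Stiefel-manifold fibres $V_n(\R^{n+k})\to V_n(\R^{n+k+1})$, and the connectivity of this map is sufficient to make $\sigma_k$ a bijection on regular-homotopy classes of framed immersions of any fixed $n$-manifold $M$ for every $k\geq 1$. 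Applying the same $h$-principle to $(n+1)$-dimensional cobordisms-with-boundary between representatives then promotes the bijection to the cobordism groups $I_k(n)$.

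\emph{Main obstacle.} The technical heart is the last step: extending Hirsch's regular-homotopy bijection to cobordism classes in every codimension $k\geq 1$, not only in the stable range. This requires the relative (parametric) form of the $h$-principle, applied to the compact $(n+1)$-dimensional cobordisms; the connectivity estimates on the Stiefel fibres are precisely strong enough for this argument to succeed uniformly in $k$.
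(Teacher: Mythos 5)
Your architecture (stabilise, identify the stable range with $\pi^s(n)$ via Pontryagin--Thom, then destabilise via the $h$-principle) is reasonable, but the destabilisation step as you state it breaks down at $k=1$, which is precisely the case this paper needs. First, for \emph{framed} immersions the relevant fibres are not the Stiefel manifolds $V_n(\R^{n+k})$: a framed immersion of $M^n$ corresponds to a trivialisation $TM\oplus\epsilon^k\cong\epsilon^{n+k}$, and two such trivialisations differ by a map $M\to O(n+k)$. The stabilisation $O(n+k)\to O(n+k+1)$ is only $(n+k-1)$-connected, so on $[M^n,O(n+k)]$ it is surjective for $k\geq 1$ but injective only for $k\geq 2$. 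Accordingly, your claim that $\sigma_k$ is a bijection on regular-homotopy classes of framed immersions of a fixed $M$ for \emph{every} $k\geq 1$ is false: for $M=S^1$, $k=1$, Whitney--Graustein gives infinitely many regular-homotopy classes of framed immersed plane curves, and these collapse after one stabilisation. So the connectivity estimate is not ``precisely strong enough'' at the level where you invoke it.

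The Fact is nevertheless true, and the repair is to argue at the level of cobordism rather than regular homotopy. Surjectivity of $\sigma_k$ only needs surjectivity on formal classes, which holds for $k\geq 1$. For injectivity one must destabilise the $(n+1)$-dimensional framed immersed cobordism $W$ rel its boundary: the homotopy fibre of $O(n+k+1)\to O(n+k+2)$ is $\Omega S^{n+k+1}$, so the obstructions lie in $H^{j}(W,\partial W;\pi_{j}(S^{n+k+1}))$ for $j\leq n+1$, and these coefficient groups vanish because $n+1<n+k+1$ for $k\geq 1$; the relative Hirsch $h$-principle (valid in positive codimension) then realises the destabilised formal data by a genuine framed immersed cobordism. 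This computation is the entire content of the statement and must be made explicit. Note that the paper takes a different, more geometric route: it deduces Fact~\ref{fact:ImmFr} together with Fact~\ref{fact:normalFr} from the Rourke--Sanderson (Multi-)Compression Theorem (Theorem~\ref{thm:multicompression}, proved in Appendix~3), which straightens the normal frame of an embedding $M^m\hookrightarrow\R^n\times\R^k$ with $n>m$ by an ambient isotopy; applying this to manifolds and to cobordisms embedded with framings in high codimension identifies framed immersions in $\R^{n+k}$ up to cobordism with stably normally framed manifolds up to framed cobordism, hence with $\pi^s(n)$ by classical Pontryagin--Thom. Either route works, but yours requires the obstruction-theoretic argument on the cobordism, not the regular-homotopy bijection you assert.
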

\noindent
An equivalent formulation is the following:
\par
\begin{fact}\label{fact:normalFr}
The cobordism group of pairs $(M^n,F)$, where $M^n$ is a stably parallelizable $n$-manifold and $F$ is a trivialization of its stable normal bundle is isomorphic to $\pi^s(n)$.
\end{fact}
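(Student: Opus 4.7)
The plan is to exhibit a natural bijection between cobordism classes of framed immersions $M^n\looparrowright\R^{n+k}$ (for some $k\geq 1$) and cobordism classes of stably framed $n$-manifolds; combined with Fact~\ref{fact:ImmFr} this yields the stated isomorphism.

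First I would construct the forgetful map. A framed immersion $(f,\phi)$ with $\phi\colon\nu_f\xrightarrow{\cong}\epsilon^k$ gives, via the canonical splitting $TM\oplus\nu_f\cong f^*T\R^{n+k}\cong\epsilon^{n+k}$, a stable tangent framing $TM\oplus\epsilon^k\cong\epsilon^{n+k}$, equivalently a trivialization of the stable normal bundle of $M$. A framed cobordism of immersions in $\R^{n+k}\times[0,1]$ restricts on its boundary components to a stably framed cobordism, so the construction descends to cobordism classes.

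Next I would invert this map using the Hirsch--Smale immersion theorem. Given $(M,F)$, choose $k\geq 1$ for which $F$ is represented by an actual isomorphism $TM\oplus\epsilon^k\cong\epsilon^{n+k}$. The composite $TM\hookrightarrow TM\oplus\epsilon^k\xrightarrow{F}\epsilon^{n+k}$ is a bundle monomorphism, which Hirsch's theorem realizes (uniquely up to regular homotopy) as the differential of an immersion $f\colon M\looparrowright\R^{n+k}$; the residual data of $F$ trivializes $\nu_f$. The parametric version of the same theorem, applied to a stably framed cobordism $W^{n+1}$, produces a framed-immersion cobordism in $\R^{n+k}\times[0,1]$.

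The main technical content, and the principal obstacle, is the appeal to the Hirsch--Smale $h$-principle in this relative form to show simultaneously that the forgetful map is surjective on cobordism classes and that its inverse is well-defined. Independence from the auxiliary codimension $k$ is comparatively easy: composing a framed immersion in $\R^{n+k}$ with the standard inclusion $\R^{n+k}\subset\R^{n+k+1}$ preserves the associated stable framing on $M$, and by Fact~\ref{fact:ImmFr} this suspension is an isomorphism of cobordism groups, so one may work with any sufficiently large fixed $k$.
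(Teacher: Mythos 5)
Your argument is correct and takes essentially the route the paper intends: Fact~\ref{fact:normalFr} is presented there as ``an equivalent formulation'' of Fact~\ref{fact:ImmFr}, both attributed to the combination of the Pontryagin--Thom isomorphism and Smale--Hirsch immersion theory, and your proof simply makes that equivalence explicit via the forgetful map and the (relative) Hirsch $h$-principle. The paper additionally offers the Compression Theorem (proved in its Appendix~3) as an alternative engine for passing from a stably framed manifold to a framed immersion, but that is a substitute for, not an addition to, the Hirsch--Smale step you use.
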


\par

These Facts follow from the so-called (Multi-)Compression Theorem (Theorem \ref{thm:multicompression} below).

\begin{definition}
Let $K$ be a compact subcomplex of a smooth manifold $M$. We say that $M$ \emph{retracts nicely} onto $K$ if there is a homotopy $\rho_t$, $t\in [0,1]$ (that we call a \emph{nice retraction} of $M$ onto $K$) such that
\begin{itemize}
\item $\rho_0=id_M$,
\item $\rho_t|_K=id_K$ for all $t\in [0,1]$,
\item every $\rho_t$ for $t<1$ is an embedding of $M$ into itself,
\item the image of $\rho_1$ is $K$.
\end{itemize}
\end{definition}

Before formulating the theorem, note that given natural numbers $k$, $m$ and $n \geq m$ as well as an immersion $i: M^m \looparrowright \R^n \times \R^k$ of a compact $m$-dimensional manifold $M^m$ with $k$ independent normal vector fields ${v_1,\ \dots,\ v_k}$, one can consider an extension of $i$ to $M \times D^k_\varepsilon$ for a small $k$-disc $D^k_\varepsilon = \{ (y_1, \dots, y_k) : \sum y_j^2 < \varepsilon\}$ for any sufficiently small positive $\varepsilon$:
\begin{align*}
\hat i : M \times D^k_\varepsilon &\looparrowright \R^n \times \R^k \\
(p,y_1, \dots,y_k) &\mapsto i(p)+\sum_{j=1}^k y_j v_j(p)
\end{align*}

\begin{theorem}[\cite{compression}]\label{thm:multicompression}
\begin{enumerate}[a)]
\item Given an embedding $i:M^m \hookrightarrow \R^n \times \R^k$ of a compact manifold $M$ (possibly with boundary) equipped with $k$ linearly independent normal vector fields $v_1$, $\dots$, $v_k$, such that $n>m$, there is an isotopy $\Phi_t$ of $\R^n \times \R^k$ such that $\Phi_0$ is the identity and $d\Phi_1(v_j)$ is parallel to the $j$th coordinate vector $\mathbf e_j$ of $\R^k$ for all $j$ with $1 \leq j \leq k$.

The same statement remains true if $n=m$ and each component of $M$ is a compact manifold with non-empty boundary.

\item If the vector fields $v_j$, $j\in I \subseteq \{1,\dots,k\}$ were already parallel to the corresponding vectors $\mathbf e_j$, $j\in I$, the vector fields $v_{\hat\jmath}$, $\hat \jmath \not\in I$ were orthogonal to all $\mathbf e_j$, $j\in I$, and either $n>m$ or $n=m$ and there exists a compact $(n-1)$-dimensional cell complex $K\subset M$ onto which $M$ retracts nicely via a homotopy $\rho_t$ for which the composition homotopy $i\circ\rho_t$ keeps the coordinates of $\R^k$ that belong to $I$ fixed, then for any $\hat\jmath \not \in I$ the isotopy $\Phi_t$ can be chosen so that
\begin{itemize}
\item the coordinates of $\R^k$ that belong to $I$ are kept fixed;
\item the map $(p,x,t) \mapsto \hat \Phi_t(p,x) := \Phi_t(\hat i(p,x))$ with $p\in M$, $x\in D^{\vert I \vert}_\varepsilon$ and $t \in [0, \infty)$ is an immersion of $M \times D^{\vert I\vert}_\varepsilon \times [0, \infty)$, where  $D^{\vert I \vert}_\varepsilon= \{(y_j)_{j\in I}: \sum y_j^2 < \varepsilon\}$;
\item for some time $t_0$ and all $t\geq t_0$ we have $\Phi_t(p)=\Phi_{t_0}(p)+(t-t_0) \cdot \mathbf e_{\hat\jmath}$.
\end{itemize}

\item The isotopy $\Phi_t$ can be chosen in such a way that for any point $p \in M$ and any time $t_0 \in [0,\infty)$ the tangent vector of the curve $t \mapsto \Phi_t(i(p))$ at the point $\Phi_{t_0}(i(p))$ will not be tangent to the manifold $\Phi_{t_0}(i(M))$. That is, the map $M \times [0,\infty) \to \R^n \times \R^k$ defined by the formula $(p,t) \mapsto \Phi_t(i(p))$ is an immersion.

\item {\bf (relative version)} Let $L$ be a compact subset of~$\R^n$ such that the vector fields $v_1$, $\dots$, $v_k$ are already parallel to $\mathbf e_1$, $\dots$, $\mathbf e_k$ in an open neighbourhood of~$i^{-1}(L\times\R^k)$; if $n=m$, assume additionally that there exists a compact $(n-1)$-dimensional cell complex $K\subset M$ such that $i^{-1}(L\times \R^k) \subset K$ and $M$ retracts nicely onto $K$. Then the isotopy $\Phi_t$ can be chosen to keep the vector fields $v_1$, $\dots$, $v_k$ on~$i^{-1}(L\times\R^k)$ parallel to $\mathbf e_1$, $\dots$, $\mathbf e_k$ at all times.

\end{enumerate}
\end{theorem}
\noindent
The part $c)$ of the theorem is not stated explicitly in \cite{compression}. In Appendix $3$ we give a proof of the theorem with emphasis on part $c)$ and some simplifications in the case $k=1$.

\par

We shall mostly use the following consequence of Hirsch-Smale theory:

\begin{theorem}\label{thm:immersionCompression}
Given an \emph{immersion} $i: M^m \looparrowright \R^n \times \R^k$ of a compact manifold $M$ with $k$ independent normal vector fields $v_1$, $\dots$, $v_k$ for which $m<n$, there is a regular homotopy $\Phi_t : M\times D^k_\varepsilon \looparrowright \R^n \times \R^k$ such that
\begin{itemize}
\item $\Phi_0=\hat i$;
\item $d\Phi_1|_{M\times \mathbf 0}(v_j)$ is parallel to the $j$th coordinate vector $\mathbf e_j$ of $\R^k$ for all $j$ with $1\leq j \leq k$;
\item the image of $\Phi_t$ on $M \times \mathbf 0$ is an immersion of the cylinder $M \times [0,1]$ into $\R^n \times \R^k$.
\end{itemize}
If $m=n$ and $M$ has no closed components, then for any $(m-1)$-dimensional compact subcomplex $K \subset M$ onto which $M$ retracts nicely there exists a neighbourhood $U$ of $K$ in $M$ for which there is a regular homotopy $\Phi_t : U \looparrowright \R^n \times \R^k$ with the same properties. 
\end{theorem}

\begin{proof}
Applying \cite[Multi-compression Theorem 4.5]{compression} (\cite[Addendum (v) to Multi-compression Theorem 4.5]{compression} when $n=m$)\footnote{Note that although the cited theorem is about embeddings, \cite[Addendum (vi) to Multi-compression Theorem 4.5]{compression} ensures that embeddings can be replaced by immersions.} gives us a regular homotopy $\wt \Phi_t^{M}:M \looparrowright \R^n \times \R^k$ that comes from an isotopy of $M$ within its induced neighbourhood pulled back by $i$ from $\R^n\times \R^k$; this defines a regular homotopy $\wt \Phi_t:M\times D^k_\varepsilon \looparrowright \R^n \times \R^k$ that satisfies the required properties except the last one. By appending collars if necessary -- moving the slice $\{p\}\times D^{k-1}_\varepsilon$ (the disk $D^{k-1}_\varepsilon$ being spanned by $v_2(p)$, $\dots$, $v_k(p)$) with constant velocity $v_1(p)$ before performing $\wt\Phi_t$, and with constant velocity $\mathbf e_1$ after $\wt\Phi_t$, the whole rescaled back to time in $[0,1]$ -- we may also assume that for $t \in [0,\varepsilon) \cup (1-\varepsilon,1]$ we have $\frac{\partial\wt\Phi_t}{\partial t} = v_1$; consequently the differential of $\wt\Phi_t$ on $M \times \mathbf 0 \times \left( \{0\} \cup \{1\} \right)$ is nondegenerate and the image of $\wt\Phi_t$ on $M$ restricted to these values of $t$ is an immersion of $M \times ([0,\varepsilon) \cup (1-\varepsilon,1])$, possibly for a smaller value of $\varepsilon$. We can now extend $\wt\Phi_t$ to a map
\begin{align*}
\mathbf{\wt\Phi} : M \times [0,1] \times D^{k-1}_\varepsilon & \to \R^n\times\R^k\\
\mathbf{\wt\Phi}(p,t_1,\dots,t_k) &= \wt\Phi_{t_1}(p)+(d\wt\Phi_{t_1})_p (t_2v_2(p)+\dots+t_kv_k(p))
\end{align*}
that is an immersion for $t_1 \in [0,\varepsilon) \cup (1-\varepsilon,1]$ and define the bundle map
\begin{align*}
D : T(M\times[0,1]\times D^{k-1}_\varepsilon) &\to T(\R^n \times \R^k)&\\
(w,s_1,s_2,\dots,s_k)_{(p,t_1,t_2,\dots,t_k)}& \mapsto (d\wt\Phi_{t_1})_p (w+s_1v_1(p)+\dots+s_kv_k(p)) &\\
& \qquad\text{taken at the basepoint }\mathbf{\wt\Phi}(p,t_1,\dots,t_k) &
\end{align*}
covering $\mathbf{\wt\Phi}$. This map is fibrewise injective since the linear span of $v_1$, $\dots$, $v_k$ is normal to $TM$. In addition, $D$ coincides with the derivative map of the immersion $\mathbf{\wt\Phi}|_{M \times ([0,\varepsilon) \cup (1-\varepsilon,1]) \times D^{k-1}_\varepsilon}$, so by applying \cite[Theorem 5.7]{Hirsch} with $K=M\times[0,1]\times D^{k-1}_\varepsilon$ and $L=M \times \left([0,\varepsilon) \cup (1-\varepsilon,1]\right) \times D^{k-1}_\varepsilon$ we obtain an immersion $\Phi_t$ of $M\times [0,1]\times D^{k-1}_\varepsilon$ into $\R^n\times \R^k$ that coincides with $\wt\Phi_t$ on $M$ for $t\in [0,\varepsilon) \cup (1-\varepsilon,1]$. Reinterpreting $\Phi_t$ as a regular homotopy of $\Phi_0 = \wt \Phi_0 = i$ equipped with $k$ normal vector fields finishes the proof.
\end{proof}

\subsection{Framed immersions on the boundary of a singularity}\label{subsection:boundary}

We outline a connection between singularities and stable homotopy groups of spheres. Namely, we define elements of $\mathcal G$ that describe incidences of the images of singularity strata.
\par
We use the notation $\partial f$ to denote the \emph{link} of a map $f: \R^a \to \R^b$, that is, the restriction of $f$ to the preimage $f^{-1}(S^{b-1})$ of some sufficiently small sphere $S^{b-1} \subset \R^b$ centered at the origin. Later, we denote the same way the restriction of a map $g:(M,\partial M) \to (P,\partial P)$ to the boundary: $\partial g = g|_{\partial M}: \partial M \to \partial P$. This notation is consistent with the notion of a link in the sense that the link of $f: \R^a\to\R^b$ is the map $\partial g$, where $g: D^a \to D^b$ is the restriction of $f$ to the preimage $D^a$ of a sufficiently small ball $D^b$ in the target.
\par
{\bf Example $1$:} Let us consider the Whitney umbrella map
\[
\sigma_1: \mathbb R^2 \to \mathbb R^3, \ \ \ \sigma_1(t,x) = (t, tx, x^2).
\]
(the normal form of an isolated $\Sigma^{1,0}$-point). The preimage of the unit $2$-sphere $S^2 \subset \mathbb R^3$ is a closed curve $\sigma_1^{-1}(S^2) \subset \R^2$. The restriction of $\sigma_1$ to this curve -- the link of the map $\sigma_1$ -- is an immersion. The image $\sigma_1(\sigma^{-1}_1(S^2))$ is an immersed curve in $S^2$ with a single double point. The orientations of $\R^2$ and $\R^3$ give a coorientation on this curve. Hence this curve can be equipped with a normal vector in $S^2$ and with an additional normal vector to $S^2$ in $\R^3$, resulting in an immersed framed curve in $\R^3$; this represents an element of $\pi^s(1)$ that we shall denote by $d^1\sigma_1$. It is easy to see that $d^1\sigma_1 \neq 0$ (since this immersed curve in $S^2$ has a single double point). Using the standard notation $\eta$ for the generator of $\pi^s(1) = \Z_2$, we get that $d^1\sigma_1=\eta$.

\par

{\bf Example $2$:} Let us consider the normal form of a map with an isolated cusp-point at the origin
\begin{align*}
\sigma_2 : \R^4 &\to \R^5 \\
(t_1,t_2,t_3,x) &\mapsto (t_1,t_2,t_3,z_1,z_2)\\
z_1 &= t_1 x + t_2 x^2 \\
z_2 &= t_3 x + x^3
\end{align*}
The link of this map is its restriction to $\sigma_2^{-1}(S^4)$, where $S^4$ is the unit sphere in $\R^5$. Note that the $3$-manifold $\sigma_2^{-1}(S^4)$, which we shall denote by $L^3$, is diffeomorphic to $S^3$. The link map $\partial \sigma_2 = \sigma_2|_{L^3} :L^3 \to S^4$ is a fold map, it has only $\Sigma^{1,0}$ singularities along a closed curve $\gamma$. The image of this curve $\gamma$ in $S^4$ has a canonical framing. Indeed, the map $\sigma_2$ can be lifted to an embedding $\hat \sigma_2 : \R^4 \hookrightarrow \R^6$, $\hat\sigma_2 (t_1,t_2,t_3,x) = \left(\sigma_2(t_1,t_2,t_3,x),x\right)$ such that the composition of $\hat \sigma_2$ with the projection $\R^6 \to \R^5$ is $\sigma_2$. Hence the two preimages of any double point of $\sigma_2$ near the singularity curve $\sigma_2(\gamma)$ have an ordering and so one gets two of the normal framing vectors on the singularity curve $\sigma_2(\gamma)$. In order to get the third framing vector we note that $\sigma_2(\gamma)$ is the boundary of the surface formed by the double points of $\partial \sigma_2$ in $S^4$. The inward-pointing normal vector along $\sigma_2(\gamma)$ of this surface will be the third framing vector. (In Appendix $2$ we shall describe the framing that arises naturally on the image of the top singularity stratum of a map obtained as a generic projection of an immersion.) The curve $\sigma_2(\gamma)$ with this framing represents an element in $\pi^s(1)$ that we denote by $d^1(\sigma_2)$. We shall show that $d^1(\sigma_2)=0$ (see Lemma \ref{lemma:d1cusp} and Appendix $1$).
\par
In the present situation we can construct one more element associated to $\sigma_2$ in a quotient group of $\mathcal G$, which we shall denote by $d^2(\sigma_2)$. We construct this element (after making some choices) in $\pi^s(3) \cong \Z_{24}$ but it will be well-defined only in the quotient group $\Z_{24}/\Z_2$. The definition of $d^2(\sigma_2)$ is the following. By a result of \cite{Sz2} (that we shall recall in \ref{section:SS}, see also \cite{Te}) a framed cobordism of the embedded singularity curve $\sigma_2(\gamma)$ can be extended to a cobordism of the link map $\partial \sigma_2:L^3 \to S^4$. In particular, since the (framed) curve $\sigma_2(\gamma)$ is (framed) null-cobordant, the link map $\partial \sigma_2$ is fold-cobordant to an (oriented) immersion. That is, there is a compact oriented $4$-manifold $W^4$ such that $\partial W^4 = L^3 \sqcup V^3$ and there is a $\Sigma^{1,0}$-map $C:(W^4,L^3,V^3) \to (S^4 \times [0,1], S^4 \times \{0\}, S^4 \times \{1\} )$ such that the restriction $C|_{L^3}: L^3 \to S^4 \times \{0\}$ is $\partial \sigma_2$ and the restriction $C|_{V^3}: V^3 \to S^4 \times \{1\}$ is an immersion, which we denote by $\partial' \sigma_2$. It represents an element in $\pi^s(3)$ and its image in $\pi^s(3)/\Z_2$ is independent of the choice of $C$ (as detailed in the description of $d^2$ in subsection \ref{section:geometric}). The obtained element in $\pi^s(3)/\Z_2$ is $d^2(\sigma_2)$.
\par
For future reference we introduce the notation $\sigma_2^*$ for the map
$$
\sigma_2^* = \sigma_2 \cup C : D^4 \underset{L^3}{\cup} W^4 \to D^5_2 = D^5 \underset{S^4\times \{0 \}}{\cup} S^4 \times [0,1].
$$
This definition makes sense since the maps $C$ and $\sigma_2$ coincide on the common part $L^3$ of their source manifolds, and it is easy to see that the gluing can be performed to make $\sigma_2^*$ smooth. Note that $\partial \sigma_2^*$ is the immersion $V^3 \looparrowright S^4 \times \{ 1 \}$.


\par
We shall show that the stable homotopy group elements $d^1(\sigma_1)$, $d^1(\sigma_2)$, the coset $d^2(\sigma_2)$ and other analogously defined objects can be computed from a spectral sequence associated to the classifying spaces of singularities ($d^1$ and $d^2$ are in fact differentials of this spectral sequence). Next we shall describe these classifying spaces and the spectral sequence, first for the simpler case of prim maps.

\section{Classifying spaces of cobordisms of singular maps}\label{section:singularClass}

\begin{defin}
A smooth map $f: M^n \to P^{n + k}$ is called a prim map
(prim stands for the abbreviation of \emph{pr}ojected \emph{im}mersion) if
\begin{enumerate}[1)]
\item it is the composition of an immersion $g : M  \looparrowright P \times \mathbb R^1$ and the projection $p_P: P \times \mathbb R^1 \to P$, and
\item an orientation is given on the kernels of the differential of~$f$ that agrees with the orientation pulled back from $\R^1$ via the composition $p_\R \circ g$, where $p_\R : P \times \R^1 \to \R^1$ is the projection.
\end{enumerate}
For maps between manifolds with boundary $f:(M,\partial M)\to (P,\partial P)$, we shall always require that they should be {\emph{regular}}, that is, $f^{-1}(\partial P) = \partial M$ and the map $f$ in a neighbourhood of $\partial M$ can be identified with the direct product $f|_{\partial M} \times id_{[0,\varepsilon)}$ for a suitable positive $\varepsilon$.

\end{defin}

\begin{rema}
Note that for a Morin map the kernels of $df$ form a line bundle $\ker df \to \Sigma(f)$, where $\Sigma(f)$ is the set of singular points of~$f$. The conditions $1)$ and $2)$ ensure that for a prim map this bundle is orientable (trivial) and an orientation (trivialization) is chosen (the same map $f$ with a different choice of orientation on the kernels is considered to be a different prim map). The converse also holds: if a Morin map $f:M^n \to P^{n+k}$ is equipped with a trivialization of its kernel bundle, then there exists a unique (up to regular homotopy) immersion $g : M \looparrowright P \times \R^1$ such that $f = p_P \circ g$, where $p_P: P\times \R^1 \to P$ is the projection and the trivialization of the kernel bundle is the same as the one defined by the projection to $\R^1$.
\end{rema}

\begin{notat}
The cobordism group of all prim maps of $n$-dimensional oriented manifolds into $\mathbb R^{n + 1}$ will be denoted by $\Prim(n+1)$.
The analogous cobordism group of prim maps having only (at most) $\Sigma^{1,0}$-singular points (i.e.\ both the maps and the cobordisms between them are prim fold maps) will be denoted by $\Prim\Sigma^{1,0}(n+1)$; that of prim cusp maps will be denoted by $\Prim\Sigma^{1,1,0}(n+1)$; and that of prim $\Sigma^{1_i}$-maps will be denoted by $\Prim \Sigma^{1_i}(n+1)$.
\end{notat}

One can define cobordism sets of prim $\Sigma^{1,0}$ and $\Sigma^{1,1,0}$ (cooriented) maps of $n$-manifolds in arbitrary fixed $(n + 1)$-dimensional manifold $P^{n + 1}$ (instead of $\mathbb R^{n + 1}$).
The obtained sets we denote by $\Prim\Sigma^{1,0}(P)$ and \linebreak $\Prim \Sigma^{1,1,0}(P)$, respectively.\footnote{These sets carry a natural abelian group structure, but we do not use this fact here. In \cite[Remark 8]{Sz2} the analogous statement for not necessarily prim maps with $k=1$ is shown and used, with the proof easily adaptable to prim maps.}

\begin{defin}
If $f_0 : M_0^n \to P^{n+k}$ and $f_1 : M_1^n \to P^{n+k}$ are two regular prim $\Sigma^{1,(1,)0}$-maps of the oriented $n$-manifolds $M_0^n$ and $M_1^n$ to the oriented manifold $P^{n+k}$, then a cobordism between them is a regular prim $\Sigma^{1,(1,)0}$ map $F: W^{n+1} \to P \times [0,1]$, where $W$ is a compact oriented $(n+1)$-manifold such that $\partial W = M_0 \underset{\partial M_0}{\cup} F^{-1}(\partial P \times [0,1]) \underset{\partial M_1}{\cup} (-M_1)$, $F|_{M_0} = f_0$ and $F|_{M_1} = f_1$.
Note that both the domain $W$ and the target $P \times [0,1]$ of $F$ may have ``corners'': $\partial M_0 \sqcup \partial M_1$ and $\partial P \times \{ 0\} \sqcup \partial P \times \{1\}$, respectively. Regularity of $F$ shall mean that near the corners $\partial M_0$ and $\partial M_1$ the map $F$ has to be the direct product of the maps $f_0|_{\partial M_0} : \partial M_0 \to \partial P \times \{ 0 \}$ and $f_1|_{\partial M_1} : \partial M_1 \to \partial P \times \{ 1 \}$ with the identity map of $[0,\varepsilon)\times [0,\varepsilon)$.
\end{defin}

\subsubsection*{The classifying spaces}

There exist (homotopically unique) spaces $\overline{X} \Sigma^{1,0}$ and $\overline X\Sigma^{1,1,0}$ that represent the functors
\[
\aligned
P &\longrightarrow \Prim \Sigma^{1,0}(P) \ \text{ and}\\
P &\longrightarrow \Prim \Sigma^{1,1,0}(P)
\endaligned
\]
in the sense of Brown representability theorem\footnote{In order to apply Brown's theorem directly, one has to extend these functors to arbitrary simplicial complexes (not only manifolds). This is done in \cite{Sz2}.} (see \cite{Switzer}), in particular
\[
\aligned
\Prim\Sigma^{1,0}(P) &= [P, \overline X\Sigma^{1,0}]\ \text{ and}\\
\Prim\Sigma^{1,1,0}(P) &= [P, \overline X\Sigma^{1,1,0}]
\endaligned
\]
for any compact manifold $P$ (note that we have $\Prim\Sigma^{1,0}(n+1) \cong \Prim\Sigma^{1,0}(S^{n+1})$ and $\Prim\Sigma^{1,1,0}(n+1) \cong \Prim\Sigma^{1,1,0}(S^{n+1})$).
We call the spaces $\overline X \Sigma^{1,0}$ and $\overline X \Sigma^{1,1,0}$ the \emph{classifying spaces} for prim fold and prim cusp maps respectively. This type of classifying spaces in a more general setup has been explicitly constructed and investigated earlier, see \cite{Sbornik}, \cite{LNM}, \cite{RSz}, \cite{Sz2}, \cite{Te}.

\subsubsection*{Key fibrations}

For any space $Y$ we shall denote by $\Gamma Y$ the space
\[
\Omega^\infty S^\infty Y = \lim_{q \to \infty} \Omega^q S^q Y.
\]
A crucial observation in the investigation of these classifying spaces is the existence of the so-called \emph{key fibrations} (see \cite{Sz2}), which in the present cases states that there exist Serre fibrations
$$\overline p_j: \overline X\Sigma^{1_j} \to \Gamma S^{2j+1}$$
of $\overline X \Sigma^{1_j}$ over $\Gamma S^{2j+1}$ with fibre $\overline X \Sigma^{1_{j-1}}$. In particular, we have fibrations
\begin{itemize}
\item $\overline p_1: \overline X\Sigma^{1,0} \to \Gamma S^3$ of $\overline X \Sigma^{1,0}$ over $\Gamma S^3$ with fibre $\Gamma S^1$; and
\item $\overline p_2: \overline X\Sigma^{1,1,0} \to \Gamma S^5$ of $\overline X \Sigma^{1,1,0}$ over $\Gamma S^5$ with fibre $\overline X \Sigma^{1,0}$.
\end{itemize}

\section{The spectral sequences}\label{section:SS}

\subsection{The first page}

Let us denote by $\overline X_i$ for $i = -1, 0, 1, 2$ the following spaces:
\[
\overline X_{-1} = \text{point}; \ \ \overline X_{0} = \Gamma S^1; \ \ \overline X_{1} = \overline X \Sigma^{1,0}; \ \
\overline X_{2} = \overline X \Sigma^{1,1,0}.
\]
One can define a spectral sequence with starting page
\[
\overline E_{i,j}^1 = \pi_{i + j + 1}\bigl(\overline X_i, \overline X_{i - 1}\bigr), \quad i = 0, 1, 2, \quad j=0, 1, \dots
\]
and differentials $d^r_{i,j} : \overline E_{i,j}^r \to \overline E^r_{i-r,j+r-1}$ that converges to $\pi_{n + 1}(\overline X_2) = \Prim\Sigma^{1,1,0}(n+1)$.

The existence of Serre fibrations described above implies that
\[
\overline E_{i,j}^1 \cong \pi_{i + j + 1}^{\ S} (\Gamma S^{2i + 1}) = \pi^s(j - i).
\]

\subsection{The geometric meaning of the groups and differentials of the spectral sequence}\label{section:geometric}

Let $\overline X_i$ denote the classifying space of prim $\Sigma^{1_i}$-maps, so that $\Prim\Sigma^{1_i}(n+1) = \pi_{n+1}(\overline X_i)$. The relative versions of the analogous cobordism groups of maps into the halfspace $\R^{n+1}_+ =\left\{(x_0,\dots,x_n) \in \R^{n+1} : x_0 \geq 0 \right\}$ can also be introduced and they will be isomorphic to the corresponding relative homotopy groups:

\begin{defin}
Let $(M^n, \partial M)$ be a compact $n$-manifold with (possibly empty) boundary. Let $f:(M,\partial M) \to (\R_+^{n+1},\R^n)$ be a prim $\Sigma^{1_i}$-map for which $f|_{\partial M}: \partial M \to \R^n$ is a (necessarily prim) $\Sigma^{1_j}$-map for some $j \leq i$. Such a map will be called a \emph{prim $(\Sigma^{1_i},\Sigma^{1_j})$-map} (recall that we always assume $f$ to be regular in the sense of the definition in \ref{section:singularClass}).
\par
If $f_0$ and $f_1$ are two prim $(\Sigma^{1_i},\Sigma^{1_j})$-maps of the oriented $n$-manifolds $(M_0^n,\partial M_0)$ and $(M_1^n,\partial M_1)$ to $(\R_+^{n+1},\R^n)$, then a cobordism between them is a map $F: (W^{n+1},\partial W) \to (\R_+^{n+1}\times [0,1],\R^n\times [0,1])$ (where $W$ is a compact oriented $(n+1)$-manifold) such that $\partial W = M_0 \underset{\partial M_0}{\cup} Q^n \underset{\partial M_1}{\cup} (-M_1)$, $F|_{M_0} = f_0$ and $F|_{M_1} = f_1$, for which
\begin{enumerate}[a)]
\item $Q$ is a cobordism between $\partial M_0$ and $-\partial M_1$;
\item $F|_{Q}:Q \to \R^n\times [0,1]$ is a prim $\Sigma^{1_j}$-cobordism between $f_0|_{\partial M_0} : \partial M_0 \to \R^n \times \{ 0 \}$ and $f_1|_{\partial M_1} : \partial M_1 \to \R^n \times \{ 1 \}$;
\item $F$ is a prim $\Sigma^{1_i}$-map.
\end{enumerate}
Note that both the domain $W$ and the target $\R_+^{n+1} \times [0,1]$ of $F$ have ``corners'': $\partial M_0 \sqcup \partial M_1$ and $\R^n\times \{ 0\} \sqcup \R^n \times \{1\}$ respectively. Near the corners $\partial M_0$ and $\partial M_1$ the map $F$ has to be the direct product of the maps $f_0|_{\partial M_0}$ and $f_1|_{\partial M_1}$ with the identity map of $[0,\varepsilon)\times [0,\varepsilon)$.
\par
The cobordism group of prim $(\Sigma^{1_i},\Sigma^{1_j})$-maps of oriented $n$-manifolds to $(\R_+^{n+1},\R^n)$ will be denoted by $\Prim(\Sigma^{1_i},\Sigma^{1_j})(n+1)$.
\end{defin}
Analogously to the isomorphism $\Prim\Sigma^{1_i}(n+1) \cong \pi_{n+1}(\overline X_i)$ one obtains the isomorphism
\begin{equation}\tag{\dag}\label{eq:classify}
\Prim(\Sigma^{1_i},\Sigma^{1_j})(n+1) \cong \pi_{n+1}(\overline X_i,\overline X_j).
\end{equation}

\begin{rema}
Note that $\overline X_0 = \Gamma S^1$. Indeed, $\Sigma^{1_0} = \Sigma^0$-maps are non-singular, i.e. immersions, and the classifying space for codimension $1$ oriented immersions is known to be $\Gamma S^1$ (see Fact \ref{fact:ImmFr}).
\end{rema}

The fibration $\overline p_i:\overline X_i \to \Gamma S^{2i+1}$ with fiber $\overline X_{i-1}$ induces an isomorphism of homotopy groups $(\overline p_i)_*: \pi_{n+1}(\overline X_i,\overline X_{i-1}) \to \pi_{n+1}(\Gamma S^{2i+1}) = \pi^s(n-2i)$. The geometric interpretation of this isomorphism is the following: to the cobordism class of a prim $(\Sigma^{1_i},\Sigma^{1_{i-1}})$-map $f:(M,\partial M) \to (\R_+^{n+1},\R^n)$ the mapping $(\overline p_i)_*$ associates the cobordism class of the framed immersion $f|_{\Sigma^{1_i}(f)}$ (the framing is described in Appendix $2$). Note that in particular this description implies that whenever two prim $(\Sigma^{1_i},\Sigma^{1_{i-1}})$-maps of $n$-manifolds have framed cobordant images of their $\Sigma^{1_i}$-points, then they represent the same element in $\Prim(\Sigma^{1_i},\Sigma^{1_{i-1}})(n+1)$.
\par
\paragraph*{Geometric descriptions of $d^1$.} The differential
$$
d^1: \overline E^1_{i,j} \cong \pi_{i+j+1}(\overline X_i,\overline X_{i-1}) \to \overline E^1_{i-1,j} \cong \pi_{i+j}(\overline X_{i-1},\overline X_{i-2})
$$
is simply the boundary homomorphism $\partial$ in the homotopy exact sequence of the triple $(\overline X_i,\overline X_{i-1},\overline X_{i-2})$. Composing $\partial$ with the isomorphism $(\overline p_{i-1})_*$ one can see that if $f:(M^n,\partial M) \to (\R_+^{n+1},\R^n)$ is a prim $(\Sigma^{1_i}, \Sigma^{1_{i-1}})$-map that represents the cobordism class $[f]=u\in \pi_{n+1}(\overline X_i,\overline X_{i-1}) = \overline E^1_{i,n-i} \cong \pi^s(n-2i)$, then $d^1(u) \in \pi^s((n-1)-2(i-1)) = \pi^s(n-2i+1)$ is represented by the framed immersion $f|_{\Sigma^{1_{i-1}}(f|_{\partial M})}$ in $\R^n$.
\par
There is an alternative description of $d^1$ that we shall use later as well. Let $u \in \pi_{n+1}(\overline X_i,\overline X_{i-1})$ and $f$ be a representative of $u$ as above, and let $T$ and $\wt T$ be the (immersed) tubular neighbourhoods of the top singularity strata $\Sigma^{1_i}(f)$ and $f\left(\Sigma^{1_i}(f)\right)$ in $M$ and $\R_+^{n+1}$, respectively, with the property that $f|_T$ maps $(T,\partial T)$ to $(\wt T, \partial\wt T)$. Now $f|_{\Sigma^{1_{i-1}}(f|_{\partial T})}: \Sigma^{1_{i-1}}(f|_{\partial T}) \to \partial \wt T$ is a framed immersion into $\partial \wt T$. Note that 
the normal framing of $f\left(\Sigma^{1_{i-1}}(f|_{\partial T})\right)$ inside $\partial \wt T$ defines a framing of the stable normal bundle of $f\left( \Sigma^{1_{i-1}}(f|_{\partial T})\right)$ because adding the unique outward-pointing normal vector of $\partial \wt T$ in $\R_+^{n+1}$ one obtains a normal framing in $\R_+^{n+1}$. Hence $f\left( \Sigma^{1_{i-1}}(f|_{\partial T})\right)$ with the given framing represents an element of $\pi^s(n-2i+1)$; this element is $d^1(u)$. The fact that these two descriptions of $d^1(u)$ yield the same element in $\pi^s(n-2i+1)$ follows from the fact that $f\left(\Sigma^{1_{i-1}}\left(f|_{M\setminus \mathring T}\right)\right)$ is a framed immersed cobordism between the two representatives (here we use Theorem \ref{thm:immersionCompression}).
\par
This alternative description of $d^1$ actually generalizes to the higher differentials as well (even though here we only consider $d^1$ and $d^2$; in \cite{part2} we utilize this fact for $d^r$ with greater $r$ as well).

\paragraph*{Geometric description of $d^2$.}
Turning to the differential $d^2$, we first give a homotopic description (an expansion of the definition, in fact). Let $u\in \pi^s(n-4) \cong \pi_{n+1}(\overline X_2,\overline X_1)=\overline E^1_{2,n-2}$ be an element such that $d^1(u)=0$. Then $u$ represents an element of the page $\overline E^2$ as well (no differential is going into the groups $\overline E^1_{2,*}$). The class $d^2(u) \in \overline E^2_{0,n-1}$ is defined utilizing the boundary homomorphism ${\partial: \pi_{n+1}(\overline X_2,\overline X_1) \to \pi_n(\overline X_1)}$ as follows: since $d^1(u) =0$, the class $\partial u \in \pi_n(\overline X_1)$ vanishes when considered in $\pi_n(\overline X_1,\overline X_0)$. Hence there is a class $y$ in $\pi_n(\overline X_0)$ whose image in $\pi_n(\overline X_1)$ is $\partial u$. The class $y$ is not unique, but the coset
$$[y] \in \pi_n(\overline X_0)/\im \left(\partial': \pi_{n+1}(\overline X_1,\overline X_0) \to \pi_n(\overline X_0)\right) =\overline E^2_{0,n-1}$$
is unique. By definition $d^2(u)=[y]$.
\par
Geometrically, if $f:(M,\partial M) \to (\R_+^{n+1},\R^n)$ represents the class $u\in \pi_{n+1}(\overline X_2,\overline X_1)$, then $d^1(u)=0$ means that $f|_{\Sigma^{1,0}(\partial f)}: \Sigma^{1,0}(\partial f) \to \R^n$ is a null-cobordant framed immersion (recall that here $\partial f$ is the restriction $f|_{\partial M}$ and is a prim fold map). This means that the classifying map $S^n \to \overline X_1$ of $\partial f$ becomes null-homotopic after being composed with $\overline p_1$, hence the classifying map itself can be deformed into the fiber $\overline X_0$. Since homotopy classes of maps into $\overline X_0$ correspond to cobordism classes of immersions, this deformation gives a {(prim $\Sigma^{1,0}$-)cobordism} between the prim $\Sigma^{1,0}$-map $\partial f: \partial M \to \R^n$ and an immersion that we will denote by $g$. The immersion $g$ is not unique, not even its framed cobordism class $[g]\in \pi^s(n-1)$ is, but its coset in $\pi^s(n-1)/\im d^1$ is well-defined. This coset is $d^2(u)$.

\begin{claim}
\label{claim:product}
For any $u \in \pi^s(n)$
\begin{enumerate}[a)]
\item $d^1_{2,n+2}(u)= d^1_{2,2}(\sigma_2)\circ u$ and $d^1_{1,n+1}(u)= d^1_{1,1}(\sigma_1)\circ u$
\item $d^2_{2,n+2}(u)$ is represented by $d^2_{2,2}(\sigma_2)\circ u$ whenever $u\in \ker d^1_{2,n+2}$.
\end{enumerate}
\end{claim}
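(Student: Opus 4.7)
The plan is to build a geometric product operation $g \mapsto g\#N$ that implements the $\pi^s_*$-module structure on the $\overline E^1$-page, after which both $d^1$ and $d^2$ inherit the stated multiplicativity directly from their geometric descriptions in subsection~\ref{section:geometric}. Concretely, given a prim $(\Sigma^{1_i},\Sigma^{1_{i-1}})$-map $g:(X^m,\partial X)\to(\R^{m+1}_+,\R^m)$ and a framed immersion $\iota:N^n\looparrowright\R^{n+1}$ representing $u\in\pi^s(n)$, I would produce a prim $(\Sigma^{1_i},\Sigma^{1_{i-1}})$-map $g\#N:X\times N\to\R^{m+n+1}_+$ whose top singularity stratum is $\Sigma^{1_i}(g)\times N$ carrying the product framing. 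Under the identification $\overline E^1_{i,n+i}\cong\pi^s(n)$ (reading off the framed cobordism class of the top-stratum image), this immediately gives $[\sigma_i\#N]=u$, since for $\sigma_i$ itself the top stratum is a single framed point.

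To build $g\#N$, take the prim lift $\tilde g:X\looparrowright\R^{m+1}_+\times\R$ provided by the prim structure of $g$, form the product immersion $\tilde g\times\iota:X\times N\looparrowright\R^{m+1}_+\times\R\times\R^{n+1}$, and note that its normal bundle contains the framing vector of $\iota$ as well as the distinguished vertical $\R$-direction from the lift of $g$. Using the relative version of Corollary~\ref{cor:immersionCompression}, isotope this immersion so that the framing vector of $\iota$ becomes parallel to the last coordinate of $\R^{n+1}$ while keeping the vertical direction of the lift of $g$ rigidly parallel to the $\R$-factor. Projecting out the compressed coordinate yields an immersion $X\times N\looparrowright\R^{m+n+1}_+\times\R$; its further projection onto $\R^{m+n+1}_+$ is the sought prim map $g\#N$, with prim (kernel) direction given by the preserved vertical factor.

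With $g\#N$ in hand, part~(a) follows from the geometric description of $d^1$: the construction satisfies $\partial(\sigma_i\#N)=\partial\sigma_i\#N$, so the framed image of the $\Sigma^{1_{i-1}}$-stratum of this boundary splits as the product of the corresponding framed stratum of $\partial\sigma_i$ with $\iota(N)$. Under the cross-product-gives-composition pairing on framed cobordism, this product represents $d^1(\sigma_i)\circ u\in\pi^s(n+1)$. For part~(b), given $u\in\ker d^1_{2,n+2}$, choose a fold cobordism $C$ from $\partial\sigma_2$ to an immersion $\partial'\sigma_2$ as in the construction of $d^2(\sigma_2)$ in Example~2, and apply the same product procedure to obtain a fold cobordism $C\#N$ between $\partial(\sigma_2\#N)$ and the immersion $\partial'\sigma_2\#N$. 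By definition $d^2([\sigma_2\#N])$ is then represented by the framed class of $\partial'\sigma_2\#N$ in $\pi^s(n+3)$, which is $d^2(\sigma_2)\circ u$.

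The main obstacle is the rigorous verification that the product construction is well-defined on cobordism classes and compatible with the isomorphism~(\ref{eq:classify}). Two subtleties arise. First, the multicompression must preserve the Morin/prim structure on the nose, i.e.\ the vertical lift direction must remain exactly parallel to the $\R$-factor throughout; this is where the relative version of Theorem~\ref{thm:multicompression} together with part~$c)$ are essential, guaranteeing that the construction yields a regular homotopy rather than merely an isotopy of sets. Second, in part~(b) the target of $d^2$ is a quotient by $\im d^1$, so one must know that $\im d^1\circ u\subseteq\im d^1$; this follows from part~(a) applied at bidegree $(1,n+3)$, since $(\eta\circ w)\circ u=\eta\circ(w\circ u)$ realises $\im d^1_{1,3}\circ u$ inside $\im d^1_{1,n+3}$ and hence makes $d^2(\sigma_2)\circ u$ well-defined modulo $\im d^1_{1,n+3}$.
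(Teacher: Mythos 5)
Your overall strategy is the mirror image of the paper's: you build a distinguished product representative $\sigma_i\#N$ of $u$ and compute its boundary, whereas the paper starts from an \emph{arbitrary} representative $f$ of $u$ and extracts the product structure from the tubular neighbourhood of its top singularity stratum --- by Appendix~2 the map $f|_T:(T,\partial T)\to(\wt T,\partial\wt T)$ is literally $\bigl(f|_{\Sigma^{1_i}(f)}\bigr)\times\sigma_i$, and the ``alternative description of $d^1$'' (restriction to $\partial\wt T$) then exhibits $d^1(u)$ as the cross product of the framed manifold $f\left(\Sigma^{1_i}(f)\right)$, representing $u$, with the framed link of $\sigma_i$, representing $d^1(\sigma_i)$. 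The computational core --- cross products of framed manifolds realize the composition product, and $\overline E^1$-classes are detected by the framed images of top strata --- is the same in both arguments, as is the treatment of part~(b) via the cobordism $C$/$\sigma_2^*$ and the well-definedness check $\im d^1_{1,3}\circ\pi^s(n)\subset\im d^1_{1,n+3}$. What the paper's direction buys is that it never has to \emph{construct} a global singular map from the product data: for $d^1$ the restriction to $\partial\wt T$ is already present inside $f$, and for $d^2$ only one extension step is needed (pushing a compact neighbourhood of the image of $\partial f^*$ into $\partial\R^{n+5}_+$ by Theorem~\ref{thm:multicompression}).

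That construction step is where your write-up has genuine gaps. First, in building $g\#N$ you assert that the normal bundle of $\tilde g\times\iota$ contains ``the distinguished vertical $\R$-direction from the lift of $g$''; this fails exactly at the singular points of $g$, where the vertical direction is \emph{tangent} to $\tilde g(X)$ --- that tangency is what creates the $\Sigma^{1_j}$-points. Hence part~$b)$ of Theorem~\ref{thm:multicompression} (which concerns normal fields that are already straightened) does not control the vertical direction, and even an ambient isotopy fixing the last coordinate of every point need not preserve the singularity type of the projection; you would have to argue separately that the compressed immersion still projects to a prim $\Sigma^{1_i}$-map with singular locus $\Sigma^{1_i}(g)\times N$. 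This can be sidestepped: represent $u$ by a framed immersion $N\looparrowright\R^{n+2i+1}$ with $2i+1$ framing vectors (Fact~\ref{fact:ImmFr} allows any codimension) and insert $\sigma_i$ into the fibres $\R^{2i+1}$ of the tubular neighbourhood $N\times\R^{2i+1}$; no compression is then needed to define the product map, only to make it proper relative to $(\R_+^{n+2i+1},\R^{n+2i})$. Second, in part~(b) you apply ``the same product procedure'' to $C$, but $C\#N$ a priori lives only in a tubular neighbourhood of the immersed $N\times S^4\times[0,1]$; turning it into an honest prim fold cobordism over $\R^{n+4}\times[0,1]$ between $\partial(\sigma_2\#N)$ and an immersion requires the extension argument the paper carries out for $\widehat{f^*}$ (cover the image of the boundary map by a compact manifold with boundary and compress it to the boundary of the half-space without creating new singular points). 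Once these two points are supplied, your argument is complete.
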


\noindent Note that in order for statement $b)$ to make sense, we need to show that:
\begin{itemize}
\item the ambiguity of $d^2_{2,2}(\sigma_2)\circ u$, which is $\left( \im d^1_{1,3} \right) \circ \pi^s(n)$, is contained in the ambiguity of $d^2_{2,n+2}(u)$, which is $\im d^1_{1,n+3}$. This holds by the second part of statement $a)$: we have
$$
\im d^1_{1,3} = \im d^1_{1,1} \circ \pi^s (2)
$$
and
$$
\im d^1_{1,3} \circ \pi^s(n) = \im d^1_{1,1} \circ \pi^s (2) \circ \pi^s(n) \subset \im d^1_{1,1} \circ \pi^s (n+2) = \im d^1_{1,n+3}.
$$
\item $d^2_{2,2}(\sigma_2)$ is meaningful, that is, $d^1_{2,2}(\sigma_2)=0$. This is shown later in Lemma \ref{lemma:d1cusp}.
\end{itemize}

\begin{proof} 

First we need a description of the composition product in the language of Pontryagin's framed embedded manifolds.

Given $\alpha \in \pi^s(m)$, $\beta \in \pi^s(n)$ let $(M^m, U^p)$ and $(N^n, V^{m + p})$ be representatives of $\alpha$ and $\beta$, where $M, N$ are manifolds of dimensions $m$ and $n$ immersed to $\mathbb R^{m + p}$ and $\mathbb R^{n + m + p}$, respectively, $U^p$ and $V^{m + p}$ are their framings:
$U^p = (u_1, \dots, u_p)$; $V^{m + p} = (v_1, \dots, v_{m + p})$, where $u_i, v_j$ are linearly independent normal vector fields to $M$ and~$N$.
These framings identify open tubular neighbourhoods of $M$ and $N$ with $M \times
\mathbb \R^p$ and $N \times \mathbb \R^{m + p}$.

Put the framed immersed submanifold $(M,U)$ of $\R^{m + p}$ into each fiber of the tubular neighbourhood $N \times \mathbb R^{m + p}$. We obtain $N \times M$ as a framed immersed manifold in $\R^{n + m + p}$. This is the representative of $\alpha \circ \beta$.
\par
Now we come to the proof of the first part of Claim~\ref{claim:product}$a)$; the proof of the second part is completely analogous. Let $u$ be an element in $\pi^s(n) = \pi_{n + 5}\left(\overline X_2, \overline X_1\right)$ and let $f: (M^{n+4}, \partial M) \to (\R_+^{n + 5}, \R^{n+4})$ be a prim $(\Sigma^{1_2},\Sigma^{1_1})$-map that represents a cobordism class corresponding to~$u$.
The boundary of $f$ is the prim fold map $\partial f = f\bigr|_{\partial M} : \partial M \to \R^{n+4}$. Let $\Sigma^{1,0}(\partial f)$ be the manifold of fold points of $\partial f$, and $\partial f\left(\Sigma^{1,0}(\partial f)\right)$ be its image.
Each normal fiber of $\Sigma^{1,0}(\partial f)$ in $\partial M$ is $\mathbb R^2$, and it is mapped by $\partial f$ to the corresponding normal fiber $\mathbb R^3$ of $\partial f\left(\Sigma^{1,0}(\partial f)\right)$ in $\R^{n+4}$ by the Whitney umbrella map $\sigma_1$, hence $\partial f\left(\Sigma^{1,0}(\partial f)\right)$ has a natural framing (see Appendix $2$). Then $d^1(u)$ is represented by this framed manifold $\partial f\left(\Sigma^{1,0}(\partial f)\right)$.
\par
Next we construct another representative of $d^1(u)$ using the alternative description of $d^1$. Choose small tubular neighbourhoods $T$ and $\wt T$ of $\Sigma^{1,1,0}(f)$ in $M$ and of $f\left(\Sigma^{1,1,0}(f)\right)$ in
$\R_+^{n+5}$, respectively. $\wt T$ is \emph{immersed} into $\R_+^{n+5}$, it is a $D^5$-bundle over $\Sigma^{1,1,0}(f)$. Recall that $f$ restricted to $\Sigma^{1,1,0}(f)$ is an immersion.
For simplicity of the description we suppose that it is an embedding.
We choose these tubular neighbourhoods $T$ and $\wt T$ so that $f$ maps $T$ to $\wt T$ and $\partial T$ to $\partial \wt T$.
Now the map $f\bigr|_{\partial T} : \partial T \to \partial \wt T$ is a fold map.
Its singularity is a framed manifold clearly representing $d^1(\sigma_2) \circ u$. By the two alternative descriptions of $d^1$ the framed manifold $\partial f(\Sigma^{1,0}(\partial f))$ (a representative of $d^1(u)$) represents the same framed cobordism class as the singularity of the fold map $f|_{\partial T} :\partial T \to \partial \wt T$. Hence $d^1(u)=d^1(\sigma_2) \circ u$.
\par
The proof of $b)$ is very similar. As before, for simplicity of the description of $d^2$ we suppose that  $f|_{\Sigma^{1,1,0}(f)}$ is an embedding rather than an immersion. Let $T$ and $\wt T$ as above be the tubular neighbourhoods of $\Sigma^{1,1,0}(f)$ and $f\left(\Sigma^{1,1,0}(f)\right)$ respectively. Note that (as shown in Appendix $2$) $T=\Sigma^{1,1,0}(f) \times D^4$, $\wt T = f\left(\Sigma^{1,1,0}(f)\right) \times D^5$ and the map $f|_T: (T,\partial T) \to (\wt T, \partial \wt T)$ is the product map $\left(f|_{\Sigma^{1,1,0}(f)}\right) \times \sigma_2$.
\par
Recall that in \ref{subsection:boundary} we defined a map $\sigma_2^*: D^4 \underset{L}{\cup} W^4 \to D_2^5$ such that $\partial \sigma_2^*$ is an immersion $V^3 \to S^4 = \partial D^5_2$ that represents $d^2(\sigma_2)$. Now we define a map
$$
f^* = f|_{\Sigma^{1,1,0}(f)} \times \sigma_2^*: \Sigma^{1,1,0}(f) \times \left(D^4 \underset{L}{\cup} W^4 \right) \to f\left(\Sigma^{1,1,0}(f)\right) \times D_2^5. 
$$
We will denote by $T_2$ the source manifold of $f^*$ and by $\wt T_2$ the target manifold, which is an enlarged tubular neighbourhood of $f\left( \Sigma^{1,1,0}(f)\right)$ in $\R_+^{n+5}$. Then $\partial f^* : \partial T_2 = \Sigma^{1,1,0}(f) \times V^3 \to \partial \wt T_2 = f\left( \Sigma^{1,1,0}(f)\right) \times S^4 \times \left\{ 1 \right\}$ is an immersion that represents $d^2(\sigma_2) \circ u$.
\par
We claim that this immersion can be extended to a proper, regular cusp map $\widehat{f^*}$ (of some compact $(n+4)$-manifold with boundary) into the entire $\R_+^{n+5}$ without changing the singular set. Indeed, the source manifold of $\partial f^*$ has dimension $n+3$, hence the image of $\partial f^*$ is an $(n+3)$-dimensional compact complex (denote it by $K$), and it can be covered by a small neighbourhood $U$ in $\partial \wt T_2$ of $K$ whose closure $\overline U$ is a compact manifold with non-empty boundary. By Theorem \ref{thm:multicompression}, there exists a deformation of $\overline U$ (equipped with the outward-pointing normal vector field) within $\R_+^{n+5}$ with time derivative nowhere tangent to the image of $\overline U$ that takes $\overline U$ into $\R^{n+4}=\partial \R^{n+5}_+$ (and the normal vector field into the outward-pointing vector field of $\R^{n+4}$). The trace of this deformation glued along $\partial f^*$ to $f^*$ gives an extension $\widehat{f^*}$ whose set of cusp points is the same as that of $f^*$ and in particular represents $u$ in $\mathcal G$.
\par
This construction combined with Theorem \ref{thm:immersionCompression} shows that $\partial f^*$ and $\partial \widehat{f^*}$ are cobordant as framed immersions and therefore represent the same element in $\mathcal G$; the statement $b)$ follows since $\partial f^*$ represents $d^2(\sigma_2) \circ u$ and $\partial \widehat{f^*}$ represents $d^2(u)$.
\end{proof}

\subsection{Calculation of the first page of the spectral sequence for prim maps}

\smallskip
\noindent
\parbox{61.5mm}{\vspace*{-36mm}
\begin{tabular}{@{}r||c|c|c|c}
&&& & \\
\cline{1-5}
$3$ & ${\Z}_{24}\rule{4pt}{0pt} $ & \hspace*{-7.5mm}$\overset{\textstyle d_{1,3}^1}{\longleftarrow} {\Z}_2$ & ${\Z}_2$ & \rule{5mm}{0pt} \\
\cline{1-5}
$2$ & ${\Z}_{2}\rule{6pt}{0pt} $ & \hspace*{-5.75mm}$\underset{\cong}{\overset{\textstyle d_{1,2}^1}{\longleftarrow}} {\Z}_2\rule{6pt}{0pt}$
& \hspace*{-6mm}$\underset{0}{\overset{\textstyle d_{2,2}^1}{\longleftarrow}} {\Z}$ & \\
\cline{1-5}
$1$ & ${\Z}_{2}\rule{6pt}{0pt} $ & \hspace*{-8.8mm}$\overset{\textstyle d_{1,1}^1}{\longleftarrow} {\Z}$ &  & \\
\cline{1-5}
$j = 0$ & ${\Z}$ &  &  & \\
\hhline{=||=|=|=|=}
& $i = 0$ & 1 & 2 &
\end{tabular}}%
\parbox[b]{65mm}{Recall that
\[
\aligned
\overline E_{i,j}^1 &= \pi_{i+ j+1}(\overline X_i, \overline X_{i - 1})\\
& = \pi_{i+j+1} (\Gamma S^{2i + 1}) = \pi^s(j-i).
\endaligned
\]
Hence on the diagonal $j = i$ we have $\pi^s(0) = {\Z}$ with generator $\iota_i$ in $\overline E^1_{i,i}$ represented by the map $\sigma_i: (D^{2i},S^{2i-1}) \to (D^{2i+1},S^{2i})$ that has an isolated $\Sigma^{1_i}$ singularity at the origin.
On the line $j = i + t$ we have $\pi^s(t)$.}

\bigskip
The value of $d_{1,1}^1(\iota_1)$ is nothing else but $[\partial \sigma_1] = \eta \in \pi^s(1) = \Z_2$.

By Claim \ref{claim:product} we have $d_{1,2}^1(\eta) = d^1_{1,1}(\iota_1) \circ \eta = \eta \circ \eta \neq 0$ in $\pi^s(2)$ (here and later we refer the reader to \cite[Chapter XIV]{To} for the information that we need about the composition product). Hence $d^1_{1,2}$ is an isomorphism and it follows that $d_{2,2}^1$ is zero (since $d^1_{1,2}\circ d^1_{2,2} =0$). In particular, we have obtained the following lemma:

\begin{lemma}\label{lemma:d1cusp}
The class $d^1(\iota_2)$, represented by the image $\sigma_2(\gamma)$ of the fold singularity curve on the boundary of the isolated cusp $\sigma_2: \R^4 \to \R^5$, vanishes.
\end{lemma}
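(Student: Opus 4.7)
My plan is to deduce the vanishing of $d^1_{2,2}(\iota_2)$ from the injectivity of the adjacent differential $d^1_{1,2}$ together with the universal identity $d^1 \circ d^1 = 0$ for any spectral sequence. This avoids any direct geometric construction on the boundary of $\sigma_2$, which looks technical.

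First I would recall the calculation of $d^1_{1,1}(\iota_1)$ carried out in Example 1: the image of the fold curve on the boundary of the Whitney umbrella is the immersed curve in $S^2$ with a single double point, which with its canonical coorientation-induced framing represents $\eta \in \pi^s(1) \cong \Z_2$. Next, by Claim \ref{claim:product}(a), the differential $d^1_{1,2}: \pi^s(1) \to \pi^s(2)$ acts by left composition with $d^1_{1,1}(\iota_1) = \eta$, so it sends the generator $\eta$ to $\eta \circ \eta$. The class $\eta^2$ is the nontrivial element of $\pi^s(2) \cong \Z_2$ by the standard computation in the stable stems (\cite[Chapter XIV]{To}), so $d^1_{1,2}$ is an isomorphism $\Z_2 \to \Z_2$, in particular injective.

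Finally, invoking the identity $d^1_{1,2} \circ d^1_{2,2} = 0$, the injectivity of $d^1_{1,2}$ forces $d^1_{2,2}$ to be the zero homomorphism; evaluating at $\iota_2 \in \overline E^1_{2,2} = \pi^s(0) \cong \Z$ yields $d^1(\iota_2) = 0$, as required. There is essentially no obstacle here once one trusts the multiplicativity of $d^1$ supplied by Claim \ref{claim:product}(a); the only substantive input is the classical fact $\eta^2 \neq 0$. A hands-on geometric verification, producing an explicit framed null-cobordism of $\sigma_2(\gamma)$ inside $S^4$, would be much more delicate and is best left to an appendix as a consistency check.
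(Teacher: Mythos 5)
Your argument is exactly the one the paper uses: it computes $d^1_{1,1}(\iota_1)=\eta$, applies Claim \ref{claim:product}(a) to get $d^1_{1,2}(\eta)=\eta\circ\eta\neq 0$, and concludes $d^1_{2,2}=0$ from $d^1_{1,2}\circ d^1_{2,2}=0$. The explicit framed null-cobordism you defer to is indeed relegated by the authors to Appendix 1 as an independent check.
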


In Appendix $1$ we give an independent, elementary proof for this statement.

\subsection{The second page $(\overline E_{i,j}^2, \overline d_{i,j}^2)$ for prim maps}

The differential $d_{1,3}^1 : {\Z}_2 \to {\Z}_{24}$ maps the generator $\eta \circ \eta$ of $\pi^s(2)$ to $d^1_{1,1}(\iota_1) \circ \eta\circ\eta =\eta \circ \eta \circ \eta$ and that is not zero (\cite[Theorem 14.1]{To}).
 Hence the group $\overline E_{0,3}^2$ is ${\Z}_{24} / {\Z}_2 = {\Z}_{12}$.
\par
\vspace*{0mm}
\vbox{
$$
\xymatrix{
& & & & \\
3 & \Z_{12} & 0 & ? & \\
2 & 0 & 0 & \Z \ar[ull]_{d^2_{2,2}} & \\
1 & 0 & \Z & & \\
j=0 & \Z & & \\
& i=0 & 1 & 2 & 
}
$$
\vspace*{-72mm}
$$
\hspace*{18pt}\begin{array}{c||c|c|c|c}
\hspace*{20pt} & \hspace*{42pt} & \hspace*{30pt}& \hspace*{28pt} & \hspace*{12pt} \\[4pt]
\cline{1-5}
 & & & & \\[24pt]
\cline{1-5}
 & & & & \\[24pt]
\cline{1-5}
 & & & & \\[24pt]
\cline{1-5}
 & & & & \\[24pt]
\hhline{=||=|=|=|=}
  & & & & \\[24pt]
\end{array}
$$
}
\par
Now we compute the differential $d_{2,2}^2 : {\Z} \to {\Z}_{12}$.
Note that this is precisely the computation of the cobordism class of the framed immersion $\partial'\sigma_2$ of the $3$-manifold $V^3$ considered in Example $2$ in \ref{subsection:boundary}.

\begin{lemma}
\label{lem:1}
$d_{2,2}^2: {\Z} \to {\Z}_{12}$ maps the generator $\iota_2$ of $\overline E^2_{2,2} \cong {\Z}$ into an element of order~$6$.
\end{lemma}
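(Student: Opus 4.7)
By the product formula (Claim~\ref{claim:product}~$b)$), $d^2_{2,2}(\iota_2)=d^2(\sigma_2)$, namely the coset $[\partial'\sigma_2]$ in $\pi^s(3)/\langle\eta^3\rangle = \Z_{24}/\Z_2 = \Z_{12}$, where $\partial'\sigma_2$ is the framed immersion $V^3\looparrowright S^4$ constructed in Example~$2$ from the fold null-cobordism $C:W^4\to S^4\times[0,1]$ of the link of $\sigma_2$. The two elements of order~$6$ in $\Z_{12}$ are $\pm 2$; any lift to $\Z_{24}\cong \Z_8\oplus\Z_3$ must have $3$-primary coordinate $\pm\alpha_1$ and $2$-primary coordinate of order~$4$ in $\Z_8$. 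Hence the plan is to compute the two primary components of $d^2(\sigma_2)$ separately, by different invariants, and combine the results.

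For the $3$-primary component, I would work modulo $\mathcal C_2$, so that the target reduces to $\Z_3\langle\alpha_1\rangle$, and the task becomes showing $d^2(\sigma_2)\equiv\pm\alpha_1$. This I would detect via a $\mathrm{mod}\,3$ invariant of $(V^3,\text{framing})$, e.g.\ the $3$-primary part of the $e$-invariant or an $\alpha_1$-Hopf-type invariant evaluated on a $\mathrm{mod}\,3$ bordism of $V^3$. For the $2$-primary component, the classical real $e$-invariant $e_{\R}:\pi^s(3)\to \mathbb Q/\Z$, which sends $k\nu$ to $k/24$, distinguishes the $\Z_8$-coordinate; it can be computed via an Atiyah--Patodi--Singer-style formula on a spin $4$-manifold bounding $V^3$. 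Together these should identify $d^2(\sigma_2)$ with $\pm 2 \pmod{12}$.

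\textbf{Main obstacle.} The critical and most delicate step is the explicit geometric identification of $(V^3,\text{framing})$ from the normal form of $\sigma_2$. One needs to describe the fold curve $\gamma\subset L^3\cong S^3$ arising in the link of the normal-form cusp, construct an explicit fold cobordism $C:W^4\to S^4\times[0,1]$ from $\partial\sigma_2$ to an immersion, and then piece together the stable framing of $V^3\subset\partial W^4$ from the framing on the image of the fold locus in $\partial\wt T$ (provided by the two preimage orderings of a double point, as in Example~$2$) together with the outward normal of $\wt T$, following the alternative description of $d^1$ given in Section~\ref{section:geometric}. Once such a concrete model of $V^3$ is in hand, the two $e$-invariant computations become routine. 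The upper bound $6\,d^2(\sigma_2)=0$ in $\Z_{12}$ is the easier half, following either from an explicit framed null-cobordism of $6$ copies of $V^3$ or from compatibility with the known result $\Cob\Sigma^{1,1,0}(n+1)\underset{\mathcal C_{\{2,3\}}}{\cong}\pi^s(n)\oplus\pi^s(n-4)$ of~\cite{Sz3}, which constrains the torsion orders of differentials that can appear in this spectral sequence.
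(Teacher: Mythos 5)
Your proposal correctly reduces the problem (via Claim~\ref{claim:product}~$b)$) to evaluating $d^2(\sigma_2)=[\partial'\sigma_2]\in\Z_{24}/\Z_2$, and your arithmetic about what an order-$6$ element must look like in the two primary components is right. But what you then offer is a plan, not a proof: the decisive step --- constructing an explicit fold cobordism $C:W^4\to S^4\times[0,1]$ from the link of the normal-form cusp, identifying $(V^3,\text{framing})$ concretely, and evaluating the $2$- and $3$-primary $e$-invariants on it --- is exactly the part you defer as the ``main obstacle,'' and nothing in the proposal indicates how it would actually be carried out. There is no reason to expect these computations to be ``routine'': no explicit $W^4$ or $V^3$ is produced anywhere in the paper either, and an APS-style $e$-invariant computation on a manifold you have not constructed is not a proof. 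The upper bound $6\,d^2(\sigma_2)=0$ does follow from the $6$-splitting of \cite[Lemma 4]{Sz3} (which is how Theorem~\ref{thm:primCob}~$b)$ is proved), but the lower bound --- that the order is not $1$, $2$ or $3$ --- is left entirely open by your argument.

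The paper avoids all of this with a short diagram chase in the exact sequence of the pair $(\overline X_2,\overline X_1)$ together with the fibration $\overline p_1$: the order of $d^2(\iota_2)$ in $\overline E^2_{0,3}$ equals the order of $\Coker\bigl(\varphi:\pi_5(\overline X_2)\to\pi_5(\overline X_2,\overline X_1)\cong\Z\bigr)$, and $\varphi$ sends the class of a closed prim cusp map $f:M^4\to\R^5$ to the algebraic number of its cusp points. The order of this cokernel is therefore the minimal positive algebraic number of cusps of such a map, which is known to be $6$ by \cite[Theorem 4]{Sz1}. This converts the computation of a framed cobordism class into an already-established enumerative result and requires no explicit model of $V^3$ at all. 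If you want to salvage your route, you would need to actually exhibit $W^4$, $V^3$ and its framing and perform the $e$-invariant evaluations; as written, the core of the lemma is assumed rather than proved.
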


\begin{proof}
$\overline E^1_{2,2} \cong \pi_5(\overline X_2,\overline X_1)=\pi_5(\Gamma S^5)=\pi^s_5(S^5) = \Z$. Since $d^1_{2,2}: E^1_{2.2} \to E^1_{1,2}$ is identically zero, $\overline E^2_{2,2}=\overline E^1_{2,2}$.
\par
Consider the following commutative diagram with exact row and column:
$$
\hskip-1cm
\xymatrix{
& & \pi_4(\overline X_0)/\im \left( \pi_5(\overline X_1,\overline X_0) \overset{\partial}{\to} \pi_4(\overline X_0) \right) \ar@{>->}[d]\\
\quad\quad\quad\quad\pi_5(\overline X_2) \ar[r]^{\varphi} & \pi_5(\overline X_2,\overline X_1) \cong \Z \ar[dr]_{d^1} \ar[r] \ar[ur]^{d^2} & \pi_4(\overline X_1) \ar[d] \\
& & \pi_4(\overline X_1,\overline X_0)
}
$$
The generator $\iota_2$ of $\pi_5(\overline X_2,\overline X_1) \cong \Z$ is represented by the cusp map $\sigma_2:(D^4,L^3) \to (D^5,S^4)$ (using the notation of Example $2$). Simple diagram chasing shows that the order of $d^2(\iota_2)$ is equal to the order of $\Coker \varphi$. The latter is the minimal positive algebraic number of cusp points of prim cusp maps of oriented closed $4$-manifolds into $\R^5$. Indeed, $\varphi$ assigns to the class of a map $f:M^4 \to \R^5$ the algebraic number of its cusp points. This minimal number of cusps is known to be $6$, see \cite[Theorem 4]{Sz1}.

\end{proof}

Applying Claim \ref{claim:product} $b)$ we immediately get:

\begin{cor}
On the $3$-torsion part, the differential $d^2_{2,n+2}$ acts as the homomorphism $\alpha_1$ (as defined in \ref{section:notation}) up to sign.
\end{cor}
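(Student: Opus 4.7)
The plan is to chain together Claim \ref{claim:product}$b)$, Lemma \ref{lem:1}, and elementary arithmetic in $\Z_{12}$; there is no hard step, just a careful bookkeeping of $3$-primary components.

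First I would check that $d^2_{2,n+2}$ is actually defined on all of $\pi^s(n)$. By Claim \ref{claim:product}$a)$ we have $d^1_{2,n+2}(u)= d^1_{2,2}(\sigma_2)\circ u$, and by Lemma \ref{lemma:d1cusp} the factor $d^1_{2,2}(\sigma_2)$ vanishes. Hence $\overline E^2_{2,n+2} = \overline E^1_{2,n+2} = \pi^s(n)$, and Claim \ref{claim:product}$b)$ gives
$$
d^2_{2,n+2}(u) = d^2_{2,2}(\sigma_2)\circ u \quad \text{in } \overline E^2_{0,n+3} = \pi^s(n+3)/\im d^1_{1,n+3}.
$$

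Second, I would pin down $d^2_{2,2}(\sigma_2)$ up to sign on the $3$-primary part. The target $\overline E^2_{0,3} = \pi^s(3)/\im d^1_{1,3}$ has been computed as $\Z_{12}$, where the subgroup $\im d^1_{1,3} = \langle \eta^3\rangle$ is $2$-primary. By Lemma \ref{lem:1}, $d^2_{2,2}(\sigma_2)$ has order exactly $6$ in $\Z_{12}$, hence equals $\pm 2$. Under the canonical decomposition $\Z_{12}\cong \Z_3\oplus\Z_4$ into $3$- and $2$-primary summands, the element $\pm 2$ has $3$-primary component the generator $\pm 1 \in \Z_3$. Since the projection $\pi^s(3)\twoheadrightarrow \Z_{12}$ is injective on the $3$-primary part (its kernel $\langle\eta^3\rangle$ being $2$-primary), this generator lifts uniquely to $\pm\alpha_1\in\pi^s(3)$. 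Thus the $3$-primary part of $d^2_{2,2}(\sigma_2)$ is $\pm\alpha_1$.

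Third, I would use the ring structure of $\mathcal G$ to reduce the action of $d^2_{2,n+2}$ on $3$-torsion to multiplication by $\alpha_1$. Writing $d^2_{2,2}(\sigma_2) = \xi_3 + \xi_2$ with $\xi_3 = \pm\alpha_1$ its $3$-primary part and $\xi_2$ its $2$-primary part, one has
$$
d^2_{2,n+2}(u) = \xi_3 \circ u + \xi_2 \circ u.
$$
The summand $\xi_2\circ u$ is annihilated by a power of $2$ and so vanishes after projection to the $3$-primary component of $\overline E^2_{0,n+3}$. (Note also that $\im d^1_{1,n+3}=\eta\cdot \pi^s(n+2)$ is $2$-primary, so the $3$-primary part of $\overline E^2_{0,n+3}$ coincides with $\pi^s(n+3)_3$.) Consequently, on the $3$-primary part one has
$$
d^2_{2,n+2}(u) = \pm\alpha_1 \circ u = \pm\alpha_1(u),
$$
which is exactly the Corollary.

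The only place that might trip up a careful reader is step two — verifying that an order-$6$ element of $\Z_{12}$ has its $3$-primary component equal to a generator of $\Z_3$, and that this generator can be identified with $\pm\alpha_1$ rather than some other lift. Both are routine via the Chinese Remainder decomposition and the fact that $\langle\eta^3\rangle$ is $2$-primary.
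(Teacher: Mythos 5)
Your proposal is correct and follows the paper's own route: the paper likewise obtains the Corollary by combining Claim \ref{claim:product}$b)$ with Lemma \ref{lem:1}, identifying the order-$6$ element $d^2_{2,2}(\sigma_2)\in\Z_{12}$ with $\pm\alpha_1$ on the $3$-primary part and then composing with $u$. You have merely spelled out the bookkeeping (the $\Z_{12}\cong\Z_4\oplus\Z_3$ decomposition and the $2$-primary nature of $\im d^1_{1,n+3}$) that the paper leaves implicit in its ``we immediately get.''
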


\subsection{Computation of the cobordism group of prim fold maps of oriented $n$-manifolds to $\mathbb R^{n + 1}$}

\begin{theorem}\label{thm:primCob}
\begin{enumerate}[a)]
\item $\Prim\Sigma^{1,0}(n+1) \underset{\mathcal C_2}{\cong} \pi^s(n) \oplus \pi^s(n-2)$.
\item $\Prim\Sigma^{1,1,0}(n+1) \underset{\mathcal C_{\{2,3\}}}{\cong} \pi^s(n) \oplus \pi^s(n-2) \oplus \pi^s(n-4)$.
\end{enumerate}
\end{theorem}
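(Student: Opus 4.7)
My plan is to extract both parts from the spectral sequence of Section~\ref{section:SS} by verifying that, on the anti-diagonal $i+j=n$ that computes the target cobordism group, every differential with one end on that anti-diagonal vanishes after passage to the Serre quotient by the appropriate class.

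For part~a) the space $\overline X_1$ is filtered only by $\overline X_0\subset\overline X_1$, so the spectral sequence collapses into the long exact sequence of that pair. Using $\pi_{n+1}(\overline X_0)=\pi^s(n)$ and $\pi_{n+1}(\overline X_1,\overline X_0)=\pi^s(n-2)$, I would identify both connecting homomorphisms as composition with $\eta$ via Claim~\ref{claim:product}~a) together with $d^1_{1,1}(\iota_1)=\eta$. Since $2\eta=0$, each such composition has $2$-primary image, so modulo $\mathcal C_2$ the long exact sequence yields
\[
0\to \pi^s(n)\to\Prim\Sigma^{1,0}(n+1)\to\pi^s(n-2)\to 0
\]
in the Serre quotient, which I would split using the fibration $\overline p_1\colon\overline X_1\to\Gamma S^3$ as a right inverse in the quotient category (as in \cite{Sz3}).

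For part~b) I would run the full spectral sequence converging to $\pi_{n+1}(\overline X_2)=\Prim\Sigma^{1,1,0}(n+1)$. The three nontrivial entries on the anti-diagonal $i+j=n$ are $\pi^s(n)$, $\pi^s(n-2)$ and $\pi^s(n-4)$, and Claim~\ref{claim:product} lets me classify every incoming or outgoing differential: the first differentials in column $i=1$ are composition with $\eta$ (hence $2$-primary); the first differentials in column $i=2$ equal composition with $d^1_{2,2}(\sigma_2)=0$ by Lemma~\ref{lemma:d1cusp}; and the second differentials $\overline E^2_{2,\ast}\to\overline E^2_{0,\ast+1}$ equal composition with $d^2_{2,2}(\sigma_2)$, which by Lemma~\ref{lem:1} has order $6$ in $\pi^s(3)=\Z_{24}$ and is therefore $\{2,3\}$-primary. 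No $d^r$ with $r\geq 3$ can touch these columns because the filtration of $\overline X_2$ has length three, so the columns $i\leq -1$ and $i\geq 3$ are empty. Modulo $\mathcal C_{\{2,3\}}$ the spectral sequence therefore degenerates on this anti-diagonal, and the two resulting iterated short exact sequences split via the fibrations $\overline p_1$ and $\overline p_2$ of Section~\ref{section:singularClass}.

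The main obstacle I foresee is the second differential $d^2_{2,n-2}\colon \pi^s(n-4)\to\pi^s(n-1)$: by the corollary to Lemma~\ref{lem:1} its $3$-primary part is essentially multiplication by $\alpha_1$, so part~b) genuinely cannot be strengthened to a $\mathcal C_2$-isomorphism and this single differential is what forces the enlargement of the Serre class to $\mathcal C_{\{2,3\}}$. A subsidiary concern is producing the splittings of the iterated extensions in the Serre quotient category; these I would handle by pulling back geometric sections from the fibrations $\overline p_j\colon\overline X_j\to\Gamma S^{2j+1}$, following \cite{Sz3} rather than invoking any abstract splitting lemma.
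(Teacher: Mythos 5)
Your proposal is correct and follows essentially the same route as the paper: degeneration of the spectral sequence modulo $\mathcal C_2$ (resp. $\mathcal C_{\{2,3\}}$) because $d^1_{1,n+1}$ is composition with the order-$2$ element $\eta$ and $d^2_{2,n+2}$ is composition with an order-$6$ element, combined with the $2$- and $6$-splittings of the homotopy exact sequences of $\overline p_1$ and $\overline p_2$ taken from \cite{Sz3} to resolve the extensions. The paper additionally spells out the geometric construction of the $2$-splitting (an immersed $S^2$ in $\R^4$ with normal Euler number $2$ projected to $\R^3$), which you correctly defer to the reference.
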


\begin{proof}
We have seen that the spectral sequences computing $\Prim\Sigma^{1,0}(n+1)$ and $\Prim\Sigma^{1,1,0}(n+1)$ degenerate modulo $\mathcal C_2$ and modulo $\mathcal C_{\{2,3\}}$ respectively, because $d^1_{1,n+1}$ is multiplication by the order $2$ element $\eta$ and $d^2_{2,n+2}$ is multiplication by an element of order $6$.
\par
The fact that the cobordism groups $\Prim\Sigma^{1,0}(n+1)$ and \linebreak $\Prim\Sigma^{1,1,0}(n+1)$ are direct sums (modulo $2$- and $3$-primary torsion parts) can be shown in the same way as in \cite[Theorem B]{Sz3}. Namely, the homotopy exact sequence of the fibration $\overline p_1: (\overline X_1,\overline X_0) \to \Gamma S^3$
$$
\pi_{n+1}(\overline X_0) \to \pi_{n+1}(\overline X_1) \overset{(\overline p_1)_*}{\to} \pi_{n+1}(\Gamma S^3)
$$
has a $2$-splitting $s$, that is, there is a homomorphism $s:\pi_{n+1}(\Gamma S^3) \to \pi_{n+1}(\overline X_1)$ such that $(\overline p_1)_* \circ s$ is the multiplication by $2$. The construction of $s$ goes as follows: choose an immersion $S^2 \looparrowright \R^4$ with normal Euler number $2$. Then its generic projection to $\R^3$ will be a map $\psi: S^2 \to \R^3$ with finitely many Whitney umbrella points that inherit a sign from the orientation of the kernel bundle, and the algebraic number of these points will be $2$ (see \cite[Proposition 2.5.]{SaekiSakuma}). Now choosing any framed immersion $q: Q^{n-2} \looparrowright \R^{n+1}$ that represents an element $[q]$ in $\pi_{n+1}(\Gamma S^3) \cong \pi^s(n-2)$, the framing of $Q$ defines a prim fold map $Q \times S^2 \overset{id \times \psi}{\to} Q \times \R^3 \looparrowright \R^{n+1}$. Its class will be $s([q])$. The existence of the $2$-splitting map $s$ implies part $a)$.
\par
The existence of an analogous $6$-splitting of the homotopy exact sequence of the fibration $\overline p_2: (\overline X_2,\overline X_1) \to \Gamma S^5$ is shown in \cite[Lemma 4]{Sz3}.
It shows that $\Prim\Sigma^{1,1,0}(n+1) \underset{\mathcal C_{\{2,3\}}}{\cong} \Prim\Sigma^{1,0}(n+1) \oplus \pi^s(n-4)$ and together with part $a)$ proves part $b)$.
\end{proof}

\begin{theorem}
Let $\eta_n: \pi^s(n) \to \pi^s(n+1)$ be the homomorphism $x \mapsto \eta \circ x$. Then the following sequence is exact:
$$
0 \to \Coker \eta_{n-1} \to \Prim\Sigma^{1,0}(n+1) \to \ker\eta_{n-2} \to 0
$$
\end{theorem}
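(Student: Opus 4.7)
The plan is to directly read off the exact sequence from the spectral sequence of Section \ref{section:SS} specialised to prim fold maps, using the identifications already established in the paper.

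For prim fold maps the filtration $\overline X_{-1} \subset \overline X_0 \subset \overline X_1$ has only two nontrivial columns, $i=0$ and $i=1$. By the general identification $\overline E^1_{i,j} = \pi^s(j-i)$ and Claim \ref{claim:product}$a)$, the only nontrivial $d^1$ is
$$d^1_{1,j}: \overline E^1_{1,j} = \pi^s(j-1) \longrightarrow \overline E^1_{0,j} = \pi^s(j),\qquad x \mapsto d^1_{1,1}(\iota_1)\circ x = \eta \circ x,$$
so $d^1_{1,j} = \eta_{j-1}$ in the notation of the theorem. Since there are only two columns, no differential $d^r$ with $r \geq 2$ can act on the relevant positions, hence $\overline E^2 = \overline E^\infty$ and
$$\overline E^\infty_{0,j} = \Coker\eta_{j-1},\qquad \overline E^\infty_{1,j} = \ker\eta_{j-1}.$$

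Next I would spell out the filtration on $\pi_{n+1}(\overline X_1) = \Prim\Sigma^{1,0}(n+1)$ coming from the inclusion $\overline X_0 \hookrightarrow \overline X_1$. In the two-column situation one has
$$F_0 := \mathrm{im}\bigl(\pi_{n+1}(\overline X_0) \to \pi_{n+1}(\overline X_1)\bigr) \subset F_1 := \pi_{n+1}(\overline X_1),$$
with $F_0 \cong \overline E^\infty_{0,n} = \Coker\eta_{n-1}$ (the image is $\pi_{n+1}(\overline X_0)=\pi^s(n)$ modulo the image of $\partial: \pi_{n+2}(\overline X_1,\overline X_0) \to \pi_{n+1}(\overline X_0)$, which under the isomorphism $(\overline p_1)_*$ is exactly $\eta_{n-1}$) and $F_1/F_0 \cong \overline E^\infty_{1,n-1} = \ker\eta_{n-2}$ (this is the kernel of $d^1_{1,n}:\pi^s(n-2) \to \pi^s(n-1)$, which identifies via $(\overline p_1)_*$ with the image of $\pi_{n+1}(\overline X_1) \to \pi_{n+1}(\overline X_1,\overline X_0)$). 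Putting these together gives the short exact sequence
$$0 \to \Coker\eta_{n-1} \to \Prim\Sigma^{1,0}(n+1) \to \ker\eta_{n-2} \to 0.$$

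The only step requiring care is the identification of the boundary map $\partial$ (occurring in the two extremes of the filtration) with the $\eta$-multiplication: but this is exactly what Claim \ref{claim:product}$a)$ provides, since under $(\overline p_1)_*$ the composition $\partial$ becomes $d^1_{1,*}(\iota_1\circ x) = \eta\circ x$. Everything else is formal: a two-column spectral sequence collapses at $E^2$, and the associated graded of a two-step filtration is a short exact sequence. I expect no genuine obstacle; the content of the theorem is the computation of $d^1$ already carried out via Claim \ref{claim:product}.
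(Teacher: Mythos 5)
Your proposal is correct and is essentially the paper's own argument: the two-column spectral sequence you invoke is literally the homotopy exact sequence of the pair $(\overline X_1,\overline X_0)$, whose boundary maps are identified with $\eta_{n-1}$ and $\eta_{n-2}$ via Claim \ref{claim:product}$a)$, which is exactly how the paper proves it. (Only a harmless index slip: the differential computing $\overline E^\infty_{1,n-1}$ is $d^1_{1,n-1}$, not $d^1_{1,n}$, though you wrote its source and target correctly.)
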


\begin{proof}
In the homotopy exact sequence of the pair $(\overline X_1,\overline X_0)$ the boundary homomorphism is the differential $d^1_{1,n+1}$, which by Claim \ref{claim:product} $a)$ is just the corresponding homomorphism $\eta_n$. The statement follows immediately.
\end{proof}

Recall that for any abelian group $G$ we denote by $(G)_3$ its $3$-primary part. While $\Prim \Sigma^{1,1,0}(n+1)$ is computed by Theorem \ref{thm:primCob} modulo its $2$- and $3$-primary parts, we can also compute the $3$-primary part (up to a group extension).

\begin{theorem}\label{thm:cuspCob}
The $3$-primary part of $\Prim\Sigma^{1,1,0}(n+1)$ fits into the short exact sequence
\begin{align*}
0 & \to \left( \Coker \left( \alpha_1: \pi^s(n-3)  \to \pi^s(n) \right) \right)_3 \oplus (\pi^s(n-2))_3 \to \\
& \quad \to \left( \Prim\Sigma^{1,1,0}(n+1) \right)_3 \to \left( \ker (\alpha_1: \pi^s(n-4) \to \pi^s(n-1)) \right)_3 \to 0\\
\end{align*}
\end{theorem}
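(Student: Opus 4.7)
My approach combines the spectral sequence information already assembled with the homotopy long exact sequence of the pair $(\overline X_2, \overline X_1)$ at degree $n+1$, restricted to the $3$-primary part, where the picture simplifies dramatically. Two facts do most of the work: all $d^1$-differentials of the spectral sequence vanish on $3$-primary parts (by Claim \ref{claim:product}\,$a)$ they are compositions with the $2$-torsion element $\eta$ or with $d^1(\sigma_2) = 0$), and $d^2$ is multiplication by $\alpha_1$ up to sign by the Corollary following Lemma \ref{lem:1}.

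First I would use the pair $(\overline X_1, \overline X_0)$: its homotopy long exact sequence degenerates on $3$-primary parts to
$$
0 \to (\pi^s(n))_3 \to (\pi_{n+1}(\overline X_1))_3 \to (\pi^s(n-2))_3 \to 0.
$$
The $2$-splitting $s_1$ of $\overline p_1$ constructed in the proof of Theorem \ref{thm:primCob}\,$a)$ satisfies $(\overline p_1)_* \circ s_1 = 2\cdot \mathrm{id}$; since $2$ is invertible on $3$-primary groups, $\tfrac12 s_1$ is a genuine section and I obtain
$$
(\pi_{n+1}(\overline X_1))_3 \cong (\pi^s(n))_3 \oplus (\pi^s(n-2))_3,
$$
and the parallel argument in degree $n$ gives $(\pi_n(\overline X_1))_3 \cong (\pi^s(n-1))_3 \oplus (\pi^s(n-3))_3$.

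Next I would write the long exact sequence of the pair $(\overline X_2, \overline X_1)$ on $3$-primary parts,
$$
(\pi^s(n-3))_3 \xrightarrow{\partial_1} (\pi_{n+1}(\overline X_1))_3 \to (\pi_{n+1}(\overline X_2))_3 \to (\pi^s(n-4))_3 \xrightarrow{\partial_2} (\pi_n(\overline X_1))_3,
$$
and identify $\partial_1 = (\alpha_1, 0)$ and $\partial_2 = (\alpha_1, 0)$ under the above decompositions. The projection of each boundary onto its fiber-type summand ($(\pi^s(n-2))_3$ or $(\pi^s(n-3))_3$) is the differential $d^1$ into the pair $(\overline X_1, \overline X_0)$, which vanishes by Lemma \ref{lemma:d1cusp} and Claim \ref{claim:product}; hence each $\partial_i$ lands in the canonical image of $(\pi^s(\cdot))_3$ coming from $(\pi_*(\overline X_0))_3$, where by construction it is the differential $d^2$, i.e.\ multiplication by $\alpha_1$. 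Consequently $\mathrm{Coker}\,\partial_1 = (\Coker \alpha_1)_3 \oplus (\pi^s(n-2))_3$ and $\ker \partial_2 = (\ker \alpha_1)_3$, and the six-term exact sequence collapses to the claimed short exact sequence.

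The step that will require most care, and the main obstacle I foresee, is matching the abstract spectral sequence differentials with the concrete boundary maps of the pair long exact sequences under the chosen splittings: one must verify that $\partial_1$ factors through the canonical summand $(\pi^s(n))_3 \subset (\pi_{n+1}(\overline X_1))_3$ (which follows from naturality of the filtration, since that summand is the image of $(\pi_{n+1}(\overline X_0))_3$), and that the resulting restriction is genuinely $d^2$ in the sense of Section \ref{section:geometric}. Once this compatibility is in place, the cokernel-kernel computation is formal.
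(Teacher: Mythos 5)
Your proposal is correct, and it reaches the theorem by a route that is parallel to, but organized differently from, the paper's. The paper works through the filtration $F_{0,n}\subset F_{1,n}\subset F_{2,n}=\pi_{n+1}(\overline X_2)$ of the spectral sequence, computes the $3$-primary $E^3=E^\infty$ terms via the identification of $d^2$ with $\pm\alpha_1$, and then devotes the bulk of the argument to splitting the extension $0\to (F_{0,n})_3\to (F_{1,n})_3\to (F_{1,n}/F_{0,n})_3\to 0$: it constructs $j:F_{1,n}/F_{0,n}\to\pi_{n+1}(\overline X_2,\overline X_0)$, uses the vanishing of $\partial=d^1_{2,n-1}$ (i.e.\ $d^1(\sigma_2)=0$) to see that $\pi_{n+1}(\overline X_1,\overline X_0)\to\pi_{n+1}(\overline X_2,\overline X_0)$ is injective, lifts $j$ through it, and composes with the $2$-splitting $s$. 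You instead split $(\pi_{n+1}(\overline X_1))_3$ itself using $\tfrac12 s_1$ on the collapsed long exact sequence of $(\overline X_1,\overline X_0)$, and then read everything off the long exact sequence of $(\overline X_2,\overline X_1)$; your $\Coker\partial_1$ is exactly the paper's $(F_{1,n})_3$ and your $\ker\partial_2$ is $(\overline E^\infty_{2,n-2})_3$, so the two proofs compute the same objects. The inputs are identical in both cases (Claim~\ref{claim:product}, Lemma~\ref{lemma:d1cusp}, the Corollary of Lemma~\ref{lem:1}, and the $2$-splitting $s$), but your version trades the paper's diagram chase for the direct observation that $\partial_1$ factors through $\im\bigl(\pi_{n+1}(\overline X_0)\to\pi_{n+1}(\overline X_1)\bigr)$, where it coincides with $d^2$ because on $3$-primary parts the lift $y$ in the definition of $d^2$ is unique ($\im\partial'$ is $2$-torsion). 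This is arguably cleaner. Two points to make explicit in a final write-up: passing to $3$-primary parts must be justified as an exact operation on these long exact sequences (localize at $3$; the paper has the same implicit step), and the identification of the two summands of $\partial_1$ requires precisely the two facts you name — vanishing of the composition into $\pi_{n+1}(\overline X_1,\overline X_0)$, and the definitional matching of the remaining component with $d^2_{2,n-1}$ — both of which are available in \S\ref{section:geometric}.
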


\begin{proof}
The spectral sequence $\overline E^r_{i,j}$ converges to $\Prim\Sigma^{1,1,0}(n+1)$ and stabilizes at page $3$. Recall that on the $3$-primary part $d^2$ can be identified up to sign with the homomorphism $\alpha_1$ (Corollary of Lemma \ref{lem:1}). Hence the $3$-primary parts $\left(\overline E^3_{i,j}\right)_3$ of the groups $\overline E^3_{i,j} \cong \overline E^\infty_{i,j}$ are the following:
\begin{equation}\tag{*}\label{eq:primE}
\begin{aligned}
\left( \overline E^3_{0,j} \right)_3 &= \left(\Coker (\alpha_1: \pi^s(j-3) \to \pi^s(j))\right)_3\\
\left( \overline E^3_{1,j} \right)_3 &= (\pi^s(j-1))_3\\
\left( \overline E^3_{2,j} \right)_3 &= \left(\ker (\alpha_1: \pi^s(j-2) \to \pi^s(j+1))\right)_3
\end{aligned}
\end{equation}
By general properties of spectral sequences it holds that if we define the groups
\begin{align*}
F_{2,n} & = \Prim\Sigma^{1,1,0}(n+1) = \pi_{n+1}(\overline X_2)\\
F_{1,n} & = \im \left( \Prim\Sigma^{1,0}(n+1) \to \Prim\Sigma^{1,1,0}(n+1) \right) =\\
& \qquad = \im \left( \pi_{n+1}(\overline X_1) \to \pi_{n+1}(\overline X_2) \right)\\
F_{0,n} & = \im \left( \pi^s(n) \to \Prim\Sigma^{1,1,0}(n+1) \right) = \\
& \qquad = \im \left( \pi_{n+1}(\overline X_0) \to \pi_{n+1}(\overline X_2) \right)\\
\end{align*}
then
\begin{align*}
F_{2,n}/F_{1,n} & = \overline E^\infty_{2,n-2} \\
F_{1,n}/F_{0,n} & = \overline E^\infty_{1,n-1} \\
F_{0,n} & = \overline E^\infty_{0,n}
\end{align*}
We will show that the exact sequence
\begin{equation}\tag{$\ddagger$}\label{eq:SES}
0 \to (F_{0,n})_3 \to (F_{1,n})_3 \to (F_{1,n}/F_{0,n})_3 \to 0
\end{equation}
splits and hence $(F_{1,n})_3 \cong (F_{0,n})_3 \oplus (F_{1,n}/F_{0,n})_3$. Then the exact sequence
$$
0 \to (F_{1,n})_3 \to (F_{2,n})_3 \to (F_{2,n}/F_{1,n})_3 \to 0
$$
can be written as
$$
0 \to (F_{0,n})_3 \oplus (F_{1,n}/F_{0,n})_3 \to (F_{2,n})_3 \to (F_{2,n}/F_{1,n})_3 \to 0,
$$
and substituting \eqref{eq:primE} gives us the statement of Theorem \ref{thm:cuspCob}.
\par
It remains to show that \eqref{eq:SES} splits. Consider the following commutative diagram:
$$
\xymatrix{
&&&\pi_{n+2}(\overline X_2,\overline X_1) \ar[d]^{\partial = d^1_{2,n-1}} \\
\pi_{n+1}(\overline X_0) \ar[r] \ar@{->>}[d] & \pi_{n+1}(\overline X_1) \ar[rr] \ar@{->>}[d]^{pr}&& \pi_{n+1}(\overline X_1, \overline X_0) \ar[dd]^{i} \ar@/_1pc/[ll]_s\\
F_{0,n} \rule[-8pt]{0pt}{0pt} \ar@{>->}[d] \ar[r] & F_{1,n} \rule[-8pt]{0pt}{0pt} \ar@{>->}[d] \ar[r] & F_{1,n}/F_{0,n} \ar@{-->}[ul]_{\hat s} \ar[dr]^{j}
\ar@{-->}[ur]^{\hat j}& \\
\pi_{n+1}(\overline X_2) \ar@{=}[r]& \pi_{n+1}(\overline X_2) \ar[rr]^{r} & & \pi_{n+1}(\overline X_2,\overline X_0)
}
$$
Consider the composition map  $F_{1,n} \rightarrowtail \pi_{n+1}(\overline X_2) \overset{r}{\to} \pi_{n+1}(\overline X_2, \overline X_0)$. Its kernel is the intersection $\ker r \cap F_{1,n}$; but $\ker r$ is the image of $\pi_{n+1}(\overline X_0)$, which is $F_{0,n}$. Hence the map $r$ defines uniquely a map $j : F_{1,n}/F_{0,n} \to \pi_{n+1}(\overline X_2,\overline X_0)$. Its image $\im j$ is a subset of $\im i$ due to the commutativity of the right-hand square.
\par
By Claim \ref{claim:product}, in the (exact) rightmost column the map $\partial$ (which can be identified with $d^1_{2,n-1}$) acts on $\pi_{n+2}(\overline X_2,\overline X_1) \cong \pi_{n+2}(\Gamma S^5)$ by composition from the left by $\partial
[\sigma_2]$, which is zero. Hence the map $i$ is injective. Consequently, the map $j$ can be lifted to
a map $\hat j: F_{1,n}/F_{0,n} \to \pi_{n+1}(\overline X_1,\overline X_0)$ and composing it with the $2$-splitting $s$ gives us a map $\hat s
=s \circ \hat j : F_{1,n}/F_{0,n} \to \pi_{n+1}(\overline X_1)$ such that $pr \circ \hat s$ is a $2$-splitting of the short exact sequence $0 \to F_{0,n} \to F_{1,n} \to F_{1,n}/F_{0,n} \to 0$. This proves that on the level of $3$-primary parts this extension is trivial, as claimed.


\end{proof}

\subsection{The spectral sequence for arbitrary (not necessarily prim) cusp maps}

There are classifying spaces for the cobordisms of codimension $1$ cooriented arbitrary (not necessarily prim) $\Sigma^{1_i}$-maps as well. We denote by $X_i=X\Sigma^{1_i}$ the classifying space of such $\Sigma^{1_i}$-maps with the convention that $X_{-1} = *$. Here we will mostly be interested in $X_0$, $X_1$ and $X_2$. The filtration $X_{-1} \subset X_0 \subset X_1 \subset X_2$ gives again a spectral sequence with $E^1_{i,j}= \pi_{i+j+1}(X_i,X_{i-1})$ for $i=0,1,2$, $j=0,1,\dots$. Analogously to the fibrations $\overline p_i$ we have fibrations
\begin{itemize}
\item $p_1: X_1 \to \Gamma T(2\varepsilon^1 \oplus \gamma^1) = \Gamma S^2 \R P^\infty \text{ with fibre }X_0$
\item $p_2: X_2 \to \Gamma T(3\varepsilon^1 \oplus 2\gamma^1) = \Gamma S^3(\R P^\infty/\R P^1)$ with fibre $X_1$ (for the identification of $T(2\gamma^1)$ and $\R P^\infty / \R P^1$ see e.g. \cite[Example 1.7, ch. 15]{Husemoller}).
\end{itemize}
Here $\gamma^1$ and $\varepsilon^1$ are the canonical and the trivial line bundles over $\R P^\infty$, respectively, and $T$ stands for Thom space (recall that $\Gamma=\Omega^\infty S^\infty$). Note that $X_0=\overline X_0 = \Gamma S^1$.
\par
Observe that the base spaces of $p_i$ are different from those of $\overline p_i$. This change is due to the fact that while the normal bundles of the singularity strata for a prim map are trivial and even canonically trivialized (see Appendix $2$), for arbitrary cooriented codimension $1$ Morin maps they are direct sums of not necessarily trivial line bundles (see \cite[Theorem 6]{RSz} and the definition of $G^{SO}$ that precedes it). The bundles $2\varepsilon^1 \oplus \gamma^1$ and $3\varepsilon^1 \oplus 2 \gamma^1$ are the universal normal bundles in the target of the fold and cusp strata respectively.

\begin{prop}
\begin{enumerate}[a)]
\item $E^1_{1,j}\in \mathcal C_2$.
\item $E^1_{i,j} \cong \overline E^1_{i,j}$ modulo $\mathcal C_2$ for $i=0,2$.
\end{enumerate}
\end{prop}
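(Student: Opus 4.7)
The plan is to compute each $E^1_{i,j}$ as the stable homotopy group of the Thom space occurring as the base of the key fibration, and then to exploit the fact that $\R P^\infty$ has only $2$-torsion reduced integral homology. Concretely, the fibration $p_i : X_i \to \Gamma T(\xi_i)$ with fiber $X_{i-1}$ yields
$$
E^1_{i,j} = \pi_{i+j+1}(X_i, X_{i-1}) \cong \pi^s_{i+j+1}(T(\xi_i));
$$
using $T(k\varepsilon^1 \oplus L) = \Sigma^k T(L)$ together with the identifications given in the excerpt, we obtain $E^1_{0,j} = \pi^s(j)$, $E^1_{1,j} = \pi^s_j(\R P^\infty)$, and $E^1_{2,j} = \pi^s_j(\R P^\infty/\R P^1)$. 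In parallel, $\overline E^1_{i,j} = \pi^s(j-i)$.

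For part (a), recall that $\widetilde H_p(\R P^\infty;\Z)$ is $\Z/2$ in odd positive degrees and vanishes otherwise. By the universal coefficient theorem, $\widetilde H_p(\R P^\infty; A)$ is a $\Z/2$-module for every $p \geq 1$ and every abelian group $A$. Running the Atiyah-Hirzebruch spectral sequence
$$
E^2_{p,q} = \widetilde H_p(\R P^\infty;\pi^s(q)) \Rightarrow \pi^s_{p+q}(\R P^\infty),
$$
each nonzero entry on the diagonal $p+q=j$ has $p\geq 1$ and $q\geq 0$ (since $\pi^s(q)=0$ for $q<0$ and $\widetilde H_0=0$), is a $\Z/2$-module, and is finite (because $\pi^s(q)$ is finite for $q>0$ and $\Z/2\otimes\pi^s(0)=\Z/2$). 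Only finitely many such $(p,q)$ contribute, so $E^1_{1,j} = \pi^s_j(\R P^\infty)$ is a finite $2$-group, hence lies in $\mathcal C_2$.

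For part (b), the case $i=0$ is immediate: the excerpt records $X_0 = \overline X_0 = \Gamma S^1$, so $E^1_{0,j}=\overline E^1_{0,j}=\pi^s(j)$ tautologically. For $i=2$, apply the cofiber sequence $S^1 = \R P^1 \hookrightarrow \R P^\infty \to \R P^\infty/\R P^1$ and take the resulting Puppe long exact sequence of stable homotopy,
$$
\pi^s_j(\R P^\infty) \longrightarrow E^1_{2,j} \overset{\partial}{\longrightarrow} \pi^s_{j-1}(\R P^1) \longrightarrow \pi^s_{j-1}(\R P^\infty).
$$
The third term equals $\pi^s_{j-1}(S^1) = \pi^s(j-2) = \overline E^1_{2,j}$, and by part (a) both flanking $\R P^\infty$-terms lie in $\mathcal C_2$. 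Exactness then forces $\partial$ to be a $\mathcal C_2$-isomorphism, proving $E^1_{2,j} \underset{\mathcal C_2}{\cong} \overline E^1_{2,j}$.

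The main obstacle is a bookkeeping issue in part (a): one has to verify not merely that $\pi^s_j(\R P^\infty)$ is $2$-torsion but that it is \emph{finite}, since $\mathcal C_2$ was defined to consist of finite $2$-groups. Both properties fall out of the AHSS computation above, with finiteness relying on the observation that on any fixed total-degree diagonal only finitely many $E^2_{p,q}$ are nonzero, which in turn follows from $\pi^s(q)=0$ for $q<0$.
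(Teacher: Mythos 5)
Your proof is correct, and it splits into one part that matches the paper and one that is genuinely different. For part (a) you are doing essentially what the paper does: the paper applies the Serre--Hurewicz theorem modulo $\mathcal C_2$ to the vanishing of $H_*(\R P^\infty;\Z_p)$ for odd $p$, whereas you run the Atiyah--Hirzebruch spectral sequence; the two tools are interchangeable here, and your explicit finiteness bookkeeping is a reasonable precaution given that $\mathcal C_2$ consists of \emph{finite} $2$-groups. For part (b) with $i=2$ your route differs: the paper observes that the bottom-cell inclusion $S^2=\R P^2/\R P^1\hookrightarrow \R P^\infty/\R P^1$ induces an isomorphism on $\Z_p$-homology for all odd $p$ and invokes the mod-$\mathcal C_2$ Whitehead theorem, while you use the Puppe sequence of $\R P^1\to\R P^\infty\to\R P^\infty/\R P^1$ and deduce everything from part (a) alone, which is more self-contained. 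What the paper's version buys, and yours does not, is naturality: its $\mathcal C_2$-isomorphism is the one induced by the bottom-cell inclusion $S^5\hookrightarrow S^3(\R P^\infty/\R P^1)$, i.e.\ by the forgetting map $\overline E^1_{2,j}\to E^1_{2,j}$, and it is precisely this naturality that the proof of Theorem \ref{th:1} uses to identify the $d^2$ differentials of the two spectral sequences. Your connecting map $\partial$ is a different $\mathcal C_2$-isomorphism --- on the bottom class it is multiplication by $2$, as one sees from $H_2(\R P^\infty/\R P^1)\to H_1(\R P^1)$ --- so while it proves the Proposition as literally stated, the downstream argument would still require the paper's map, or an added remark that the same long exact sequence also shows the forgetting map itself to be a $\mathcal C_2$-isomorphism.
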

\par
\begin{proof}
\begin{enumerate}[a)]
\item Since $H_*(\R P^\infty; \Z_p) =0$ for $p$ odd, the Serre-Hurewicz theorem implies that $\pi^s_*(\R P^\infty) \in \mathcal C_2$ and therefore
$$
E^1_{1,j} \cong \pi_{j+2}(\Gamma S^2 \R P^\infty) = \pi^s_j(\R P^\infty) \in \mathcal C_2.
$$
\item Since the inclusion $S^2 = \R P^2/\R P^1 \hookrightarrow \R P^\infty/\R P^1$ induces isomorphism of $\Z_p$-homologies (the groups $H_*(\R P^\infty/\R P^2;\Z_p)$ all vanish) for $p$ odd, we have
$$
E^1_{2,j} \cong \pi_{j+3}(\Gamma S^3(\R P^\infty/\R P^1)) \underset{\mathcal C_2}{\cong} \pi_{j+3}(\Gamma S^5) \cong \overline E^1_{2,j}.
$$
We also have $X_0=\overline X_0$ and consequently
$$
E^1_{0,j} \cong \overline E^1_{0,j}.
$$
\end{enumerate}
\end{proof}

\subsection{Computation of the cobordism group of (arbitrary) cusp maps}

\begin{proof}[Proof of Theorem \ref{th:1}]
The natural forgetting map $\overline E^1_{i,j} \to E^1_{i,j}$ induces a $\mathcal C_2$-isomorphism for $i=0,2$, and $E^1_{1,j} \in \mathcal C_2$. Since the $d^1$ differential is trivial modulo $\mathcal C_2$ for both spectral sequences, the map $\overline E^2_{i,j} \to E^2_{i,j}$ is a $\mathcal C_2$-isomorphism for $i=0,2$.
\par
Hence the differential $d^2$ restricted to the $3$-primary part can be identified in the two spectral sequences, and we obtain that $\left(E^\infty_{i,j}\right)_3 = \left(E^3_{i,j}\right)_3 \cong \left(\overline E^3_{i,j}\right)_3$ for $i=0,2$ (but not for $i=1$). The statement of the theorem follows analogously to Theorem \ref{thm:cuspCob}.
\end{proof}

\section*{Appendix 1: an elementary proof of Lemma \ref{lemma:d1cusp}}

In this Appendix we give an elementary and independent proof of the fact that the curve of folds on the boundary sphere of an isolated cusp map $\sigma_2:
\R^4 \to \R^5$ with the natural framing represents the trivial element in $\pi^s(1) = {\Z}_2$.

$\sigma_2 : \R^4 \rightarrow \R^5$, $\sigma_2(t_1,t_2,t_3,x) = (t_1,t_2,t_3,t_1x+t_2x^2,t_3x+x^3)$

\[
\mathrm{d}\sigma_2 = 
\begin{bmatrix} 
1 & 0 & 0 & 0 \\
0 & 1 & 0 & 0 \\
0 & 0 & 1 & 0 \\
x & x^2 & 0 & t_1+2xt_2 \\
0 & 0 & x & t_3+3x^2
\end{bmatrix}
\]

The set of singular points of $\sigma_2$ is $\Sigma = \{ (-2xt_2, t_2, -3x^2, x) \mid t_2, x \in \R \}$, its image is $\tilde{\Sigma} = \sigma_2(\Sigma) = \{ (-2xt_2, t_2, -3x^2, -t_2x^2, -2x^3) \mid t_2, x \in \R \}$.

For a point $p \in \R^4 \setminus \Sigma$ the vector $n(p) = (-x(t_3+3x^2), -x^2(t_3+3x^2), x(t_1+2xt_2), t_3+3x^2, -(t_1+2xt_2))$ is non-zero and orthogonal to the columns of $\mathrm{d}\sigma_2$, so it is a normal vector of the immersed hypersurface $\sigma_2(\R^4 \setminus \Sigma) \subset \R^5$ at $\sigma_2(p)$.

$\tilde{\Sigma} \setminus \{ 0 \}$ is an embedded surface in $\R^5$, and it has a canonical framing:

Through each point $p = (-2xt_2, t_2, -3x^2, x) \in \Sigma \setminus \{ 0 \}$ we can define a curve
$$
p_{\varepsilon} = (-2xt_2, t_2, -3x^2-\varepsilon^2, x+\varepsilon)
$$
such that $p_0 = p$, $\frac{\partial p_{\varepsilon}}{\partial \varepsilon} (0) = (0,0,0,1) \in \ker \mathrm{d}\sigma_2$, and $\sigma_2(p_{\varepsilon}) = \sigma_2(p_{-\varepsilon}) = q_{\varepsilon^2}$, where
$$
q_{\delta} = (-2xt_2, t_2, -3x^2-\delta, -t_2(x^2-\delta), -2x^3 + 2 \delta x).$$
(Note that by taking this curve for each $p$ we have defined an orientation of the kernel line bundle of $\mathrm{d}\sigma_2$.)

The first vector of the framing is the tangent vector of the image curve $q_{\delta}$: 
\[
v_1 = \frac{\partial q_{\delta}}{\partial \delta} (0) = (0, 0, -1, t_2, 2x).
\]

Since $\sigma_2(p_{\varepsilon}) = \sigma_2(p_{-\varepsilon}) = q_{\varepsilon^2}$, in this point we have defined two normal vectors of $\sigma_2(\R^4 \setminus \Sigma)$, namely
\begin{align*}
n(p_{\pm \varepsilon}) &= \pm \varepsilon(-6x^2 - 2\varepsilon^2, -6x^3 - 10x\varepsilon^2, 2xt_2, 6x, -2t_2) +\\
& \quad +\varepsilon^2(-8x, -14x^2 - 2\varepsilon^2,2t_2,2,0).
\end{align*}
The sum and the difference of these vectors are
\begin{align*}
n(p_{\varepsilon}) + n(p_{-\varepsilon}) &= 2\varepsilon^2(-8x, -14x^2 - 2\varepsilon^2,2t_2,2,0) \text{ and}\\
n(p_{\varepsilon}) - n(p_{-\varepsilon}) &= 2\varepsilon(-6x^2 - 2\varepsilon^2, -6x^3 - 10x\varepsilon^2, 2xt_2, 6x, -2t_2).
\end{align*}
The last two vectors of the framing are the limits of (the directions of) these vectors:
\[
\begin{aligned}
v_2 &= (-3x^2, -3x^3, xt_2, 3x, -t_2) \\
v_3 &= (-4x, -7x^2, t_2, 1, 0)
\end{aligned}
\]

The following Claim implies that the framed curve $\sigma_2(\gamma)$ is null-cobordant (recall that $\gamma = \sigma_2^{-1}(S^4) \cap \Sigma$).

\begin{claim}
There is a smooth embedding $F : D^2 = \left\{ (t_2,x) \mid t_2^2 + x^2 \leq 1\right \} \rightarrow \R^5$ and a framing of $F(D^2)$ that extends $\sigma_2 \circ i$ and the canonical framing of $\tilde{\Sigma} \setminus \{ 0 \}$ restricted to $\sigma_2 \circ i(S^1)$, where $i : S^1 = \left\{ (t_2,x) \mid t_2^2 + x^2 = 1 \right\} \rightarrow \Sigma$, $i(t_2,x) = (-2xt_2,t_2,-3x^2,x)$.
\end{claim}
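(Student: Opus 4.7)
My plan is to construct $F$ and the extending framing explicitly as small perturbations of the natural parameterization
\[
\Psi(t_2, x) = (-2xt_2,\, t_2,\, -3x^2,\, -t_2 x^2,\, -2x^3)
\]
of $\tilde\Sigma$ and of the given frame $(v_1, v_2, v_3)$. The map $\Psi$ already restricts to $\sigma_2 \circ i$ on $\partial D^2$ and is globally injective on $D^2$ (its second coordinate determines $t_2$, its third determines $|x|$, and its fifth determines the sign of $x$), but it fails to be an immersion at the origin since $\partial_x \Psi(0, 0) = 0$ while $\partial_{t_2}\Psi(0, 0) = e_2 \neq 0$. Fixing a smooth cutoff $\rho\colon [0,\infty) \to [0, 1]$ with $\rho \equiv 1$ on $[0, 1/4]$ and $\rho \equiv 0$ on $[1/2, \infty)$, I would define
\[
F(t_2, x) = \Psi(t_2, x) + \mu\,\rho(t_2^2 + x^2)\,x\,e_1
\]
for sufficiently small $\mu > 0$. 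Since the perturbation affects only the first coordinate, $F$ remains injective; and $\partial_x F(0, 0) = \mu e_1 \neq 0$, so $F$ is an immersion at the origin and, for $\mu$ small, on all of $D^2$, hence a smooth embedding with $F|_{\partial D^2} = \sigma_2 \circ i$.

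For the framing, the formulas $v_1 = (0, 0, -1, t_2, 2x)$ and $v_3 = (-4x, -7x^2, t_2, 1, 0)$ already define nowhere-vanishing smooth vector fields on all of $D^2$, so I would take $w_1 := v_1$ and $w_3 := v_3$. The vector $v_2 = (-3x^2, -3x^3, xt_2, 3x, -t_2)$ vanishes at the origin --- and its unit direction $v_2/|v_2|$ has different limits along different rays, so it has an essential singularity there --- hence I modify it to
\[
w_2(t_2, x) = v_2(t_2, x) + \lambda\,\rho(t_2^2 + x^2)\,(e_2 + e_5)
\]
for sufficiently small $\lambda > 0$. The modification is supported away from $\partial D^2$, so $(w_1, w_2, w_3)|_{\partial D^2} = (v_1, v_2, v_3)$, and $w_2(0, 0) = \lambda(e_2 + e_5) \neq 0$.

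The crux of the verification is at the origin, where
\[
w_1 = -e_3,\quad w_2 = \lambda(e_2 + e_5),\quad w_3 = e_4,\quad \partial_{t_2} F = e_2,\quad \partial_x F = \mu e_1
\]
together form a basis of $\R^5$, so $(w_1, w_2, w_3)$ is a $3$-frame complementary to $T_0 F(D^2)$. Outside the support of $\rho$ we have $F = \Psi$ and $w_i = v_i$, and the canonical framing is already complementary to $T\tilde\Sigma$ by the construction given earlier in the Appendix. The remaining open conditions --- linear independence of the $w_i$ and complementarity of their span to $TF(D^2)$ --- then persist throughout $D^2$ by continuity and compactness for sufficiently small $\mu, \lambda > 0$. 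The main subtlety is precisely the essential singularity of the canonical framing at the origin: the perturbations of $\Psi$ and of $v_2$ must be coordinated so that the $e_1$-direction injected into $T_0 F(D^2)$ and the $(e_2 + e_5)$-direction added to $v_2$ jointly place the new tangent plane $\mathrm{span}(e_1, e_2)$ and the new $3$-plane $\mathrm{span}(e_2 + e_5, e_3, e_4)$ in general position in $\R^5$; once this combinatorial choice is made, the rest is a soft smallness-of-perturbation argument.
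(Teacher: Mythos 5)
Your construction of $F$ fails to be an immersion, and the failure is structural rather than a matter of choosing $\mu$ small enough. Along the $t_2$-axis one has $\partial_x\Psi(t_2,0)=(-2t_2,0,0,0,0)=-2t_2\,\mathbf e_1$, i.e.\ the direction in which $d\Psi$ degenerates at the origin is exactly $\mathbf e_1$, and it stays parallel to $\mathbf e_1$ along the whole axis. Consequently, in the region where $\rho\equiv 1$ your map satisfies
\[
\partial_x F(t_2,0)=(-2t_2+\mu,\,0,\,0,\,0,\,0),
\]
which vanishes at the interior point $(t_2,x)=(\mu/2,0)$. So your perturbation does not remove the rank drop of $d\Psi$; it merely slides the singular point of the parametrization from the origin to $(\mu/2,0)$, for every $\mu\in(0,1)$. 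You would need to perturb in a direction transverse to the limiting tangent plane $\mathrm{span}(\mathbf e_1,\mathbf e_2)$ along the $t_2$-axis (e.g.\ into the $\mathbf e_4$ or $\mathbf e_5$ coordinate, which still preserves your injectivity argument); this is in effect what the paper does, replacing the fifth coordinate $-2x^3$ by $2x(t_2^2-1)$, whose $x$-derivative at the origin is $-2\neq 0$.

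The same difficulty undermines your concluding ``soft smallness-of-perturbation'' step. The compactness argument you invoke requires the unperturbed configuration to be nondegenerate on all of $D^2$, but it is degenerate at the origin (where $d\Psi$ has rank $1$ and $v_2=0$); as $\mu,\lambda\to 0$ the locus of possible degeneracy need not disappear, it can simply migrate toward the origin --- and in your explicit setup it demonstrably does. A correct proof has to verify nondegeneracy at every point for a \emph{fixed} choice of the perturbations, which is why the paper writes down explicit modifications of $F$ and of $v_2$ (changing the first coordinate of $v_2$ to $3-3t_2^2-6x^2$) and then checks that the resulting $5\times 5$ determinant is a manifestly positive polynomial on all of $D^2$. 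Your identification of the two obstructions (non-immersivity of $\Psi$ at $0$ and the essential singularity of $v_2/|v_2|$ at $0$) is correct, but the argument that your particular perturbations resolve them is not.
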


\begin{proof}
We define such an $F$ and a framing:
\[
\begin{aligned}
F(t_2,x) &= (-2xt_2,t_2,-3x^2,-t_2x^2,2x(t_2^2-1)) \\
v_1 &= (0,0,-1,t_2,2x) \\
v_2 &= (3-3t_2^2-6x^2,-3x^3,xt_2,3x,-t_2) \\
v_3 &= (-4x,-7x^2,t_2,1,0) 
\end{aligned}
\]

These are smooth, and in the case $t_2^2+x^2=1$ they coincide with the previously defined map and framing. It is easy to check that $F$ is injective. We need to prove that the differential of $F$ is injective, and the vectors really form a framing, i.e.\ that the partial derivatives of $F$ and $v_1, v_2, v_3$ are linearly independent. Equivalently, the following matrix should be non-singular:
\[
M = 
\begin{bmatrix} 
-2x & 1 & 0 & -x^2 & 4xt_2 \\
t_2 & 0 & 3x & xt_2 & 1-t_2^2 \\
0 & 0 & -1 & t_2 & 2x \\
3-3t_2^2-6x^2 & -3x^3 & xt_2 & 3x & -t_2 \\
-4x & -7x^2 & t_2 & 1 & 0
\end{bmatrix}
\]

\begin{align*}
\det M &= 180x^8 + 568x^6t_2^2 + 323x^4t_2^4 + 120x^6 - 197x^4t_2^2 + 8x^2t_2^4 + 3t_2^6 +\\
&\qquad\qquad +51x^4 - 12x^2t_2^2 - 2t_2^4 + 24x^2 - 2t_2^2 + 3\\
&= 50(2x^2t_2^2-x^2)^2 + 6(xt_2^2-x)^2 + 2(t_2^3-t_2)^2 + 2(t_2^2-1)^2 + \\
&\qquad\qquad +180x^8 + 568x^6t_2^2 + 123x^4t_2^4 + 120x^6 + 3x^4t_2^2 + 2x^2t_2^4 +\\
&\qquad\qquad +t_2^6 + x^4 + 18x^2 + 1 \\
&> 0.
\end{align*}
Therefore $M$ is always non-singular, and the proof is complete.
\end{proof}

\section*{Appendix 2: the natural framing on the image of the manifold formed by the $\Sigma^{1_r}$-points of a cooriented prim map}

Let us consider the map
\begin{align*}
\sigma_r : (\R^{2r},0) &\to (\R^{2r+1},0),\\
(t_1,\dots,t_{2r-1},x) & \mapsto (t_1,\dots,t_{2r-1},z_1,z_2)\\
z_1 &= t_1 x+ \dots + t_r x^r\\
z_2 &= t_{r+1} x + \dots + t_{2r-1} x^{r-1} + x^{r+1}
\end{align*}
the Morin normal form of a map with an isolated $\Sigma^{1_r}$-point at $0$. Denote by $\Delta_{r+1}$ the set of $(r+1)$-tuple points, i.e. the points $\{ q\in \R^{2r+1} : \sigma_r^{-1}(q) \text{ consists of }r+1\text{ different points} \}$, and let $\overline \Delta$ be the closure of $\Delta_{r+1}$.

\begin{lemma}\label{lemma:polyroots}
\begin{enumerate}[a)]
\item The set $\overline \Delta$ is contained in the linear subspace $\mathcal P$ of $\R^{2r+1}$ of dimension $r$ defined by the equations $z_1=t_1=\dots=t_r=0$.
\item Identifying the $r$-tuple $(t_{r+1},\dots,t_{2r-1},z_2)$ with the polynomial $-z_2+t_{r+1}x + \dots + t_{2r-1} x^{r-1} + x^{r+1}$, the points of $\overline \Delta$ correspond precisely to the polynomials whose roots are all real.
\end{enumerate}
\end{lemma}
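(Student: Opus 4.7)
The plan is to analyze directly when a point of $\R^{2r+1}$ admits $r+1$ distinct preimages under $\sigma_r$. Since $\sigma_r$ fixes the first $2r-1$ coordinates, any two preimages of a target point $q=(t_1,\dots,t_{2r-1},z_1,z_2)$ must agree in the tuple $(t_1,\dots,t_{2r-1})$ and differ only in the $x$-coordinate. Thus $q\in\Delta_{r+1}$ if and only if the two polynomial equations in $x$,
\begin{align*}
P_1(x) &:= t_1x+t_2x^2+\cdots+t_rx^r-z_1=0,\\
P_2(x) &:= t_{r+1}x+t_{r+2}x^2+\cdots+t_{2r-1}x^{r-1}+x^{r+1}-z_2=0,
\end{align*}
have $r+1$ distinct common real roots. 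This reduces the whole lemma to a polynomial-theoretic statement.

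For part~(a), I would note that $P_1$ has degree at most $r$, so having $r+1$ distinct roots forces $P_1\equiv 0$, i.e.\ $z_1=t_1=\cdots=t_r=0$. Therefore $\Delta_{r+1}\subset\mathcal P$, and since $\mathcal P$ is a closed linear subspace of $\R^{2r+1}$, the closure $\overline\Delta$ is contained in $\mathcal P$ as well.

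For part~(b) I would restrict to $\mathcal P$: the equation $P_1=0$ becomes automatic, and under the stated identification of $(t_{r+1},\dots,t_{2r-1},z_2)\in\mathcal P$ with the monic degree-$(r+1)$ polynomial $P_2$ (whose $x^r$ coefficient is absent, hence zero), the condition $q\in\Delta_{r+1}$ becomes ``$P_2$ has $r+1$ distinct roots'', equivalently, all roots of $P_2$ are real and distinct. Writing $\mathcal R\subset\mathcal P$ for the set of polynomials whose complex roots are all real, counted with multiplicity, I claim $\overline\Delta=\mathcal R$. One inclusion is that $\mathcal R$ is closed, because the roots of a monic polynomial of fixed degree depend continuously on the coefficients as an unordered multiset in $\mathbb C$, so limits of elements of $\Delta_{r+1}\subset\mathcal R$ have only real roots. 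For the converse inclusion, given $P\in\mathcal R$ with real roots $\alpha_1\leq\cdots\leq\alpha_{r+1}$ satisfying the Vieta constraint $\sum\alpha_i=0$ coming from the vanishing $x^r$ coefficient, I would perturb to distinct reals $\alpha_i+\varepsilon_i$ with $\sum\varepsilon_i=0$ (e.g.\ $\varepsilon_i=(i-\tfrac{r+2}{2})\delta$ for small $\delta>0$); the resulting monic polynomials stay in $\mathcal P$, lie in $\Delta_{r+1}$, and converge to $P$ as $\delta\to 0$.

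The only subtle points I anticipate are the continuity of roots as an unordered multiset (standard, but worth citing) and the verification that root perturbations can be carried out while preserving the constraint $\sum\alpha_i=0$; both are elementary, so no serious obstacle is expected.
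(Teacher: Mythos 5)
Your proposal is correct and follows essentially the same route as the paper: degree counting on the first polynomial for part (a), and the identification of preimages with roots of the second (monic, trace-zero) polynomial plus a sum-preserving perturbation to distinct roots for part (b). You are in fact slightly more explicit than the paper about the forward inclusion $\overline\Delta\subset\mathcal R$ via continuity of roots, which the paper leaves implicit.
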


\begin{proof}

$a)$
Assume that $\sigma_r$ maps the $r+1$ different points
$$
p_j = (t_1^{(j)}, \dots, t_{2r-1}^{(j)},x_j),\quad j=1,\dots,r+1,
$$
to the same point
$$
q=(t_1,\dots,t_{2r-1},z_1,z_2).
$$
Then necessarily $t_i^{(j)}=t_i$ for all $i=1,\dots,2r-1$ and $j=1,\dots,r+1$. Since the points $p_j$ are pairwise different, the values $x_1,\dots,x_{r+1}$ are also pairwise different. But they are roots of the polynomial
$$
-z_1+t_1x+ \dots + t_rx^r,
$$
which has degree at most $r$, hence this polynomial must identically vanish. Consequently we have $z_1=t_1=\dots=t_r=0$ on $\Delta_{r+1}$ and therefore also on $\overline \Delta$ as claimed.

\par\medskip
$b)$
As in part $a)$, in the preimage of $\Delta_{r+1}$ the coordinates $x_1,\dots,x_{r+1}$ are the roots of the polynomial
$$
-z_2+t_{r+1}x+\dots+t_{2r-1}x^{r-1}+x^{r+1}.
$$
This immediately implies part $b)$ since for any polynomial that corresponds to a point in $\overline \Delta$ any sum-preserving perturbation of its (real) roots that makes them distinct gives a polynomial that corresponds to a point in $\Delta_{r+1}$.
\end{proof}

In the next lemma we describe $\overline \Delta$ by identifying it with the orthant
$$
\R^{r}_\angle = \{ (u_1, \dots, u_{r}): u_j \geq 0 \text{ for all }j=1,\dots,r\}.
$$
Moreover, we relate the natural stratification on $\mathcal P$ -- where strata are sets of polynomials with the same multiplicities of the ordered roots -- to that on $\R^r_\angle$, where the strata are the faces.

\begin{lemma}
$\overline \Delta$ is homeomorphic to the orthant $\R^{r}_\angle$, and a homeomorphism $\varphi : \R^r_\angle \to \overline \Delta$ can be chosen to map the natural stratification of $\R^r_\angle$ bijectively onto that of $\overline \Delta$ inherited from $\mathcal P$, and to be a diffeomorphism from each stratum onto the corresponding stratum.

\end{lemma}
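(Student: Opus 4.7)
By Lemma \ref{lemma:polyroots}(b), the set $\overline\Delta$ is identified with the space of monic real polynomials of degree $r+1$ of the form
$$
p(x)=x^{r+1}+t_{2r-1}x^{r-1}+\dots+t_{r+1}x-z_2
$$
all of whose roots are real; note that the absence of the $x^r$ term forces the roots (counted with multiplicity) to sum to zero. Given such a $p$, order its roots $y_1\le y_2\le\dots\le y_{r+1}$ and set $u_j=y_{j+1}-y_j\ge 0$ for $j=1,\dots,r$. The constraint $\sum_k y_k=0$ lets us recover $y_1=-\tfrac{1}{r+1}\sum_{j=1}^r(r+1-j)u_j$, hence $y_2,\dots,y_{r+1}$, as affine-linear functions of $(u_1,\dots,u_r)$. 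The map $\varphi:\R^r_\angle\to\overline\Delta$ we will use sends $(u_1,\dots,u_r)$ to the polynomial with these ordered roots, read off via the elementary symmetric functions; equivalently it sends $(u_1,\dots,u_r)$ to the point $(0,\dots,0,t_{r+1},\dots,t_{2r-1},0,z_2)\in\overline\Delta\subset\mathcal P$.

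The plan is to verify in turn that $\varphi$ is (i) a well-defined continuous bijection, (ii) stratification-preserving, and (iii) a diffeomorphism on each stratum. For (i), continuity of $\varphi$ is clear because elementary symmetric polynomials in affine functions of the $u_j$ are themselves polynomial in the $u_j$; surjectivity follows from the unique ordering of any real multiset of roots, and injectivity because the ordered roots determine the polynomial and are recovered from $(u_j)$. Continuity of $\varphi^{-1}$ will be deduced from the classical fact that the roots of a real polynomial (in the Hausdorff metric on multisets) depend continuously on the coefficients; since all roots here are assumed real, the ordered sequence $(y_1,\dots,y_{r+1})$ and hence $(u_j)$ depend continuously on the coefficients.

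For (ii) the bijection of strata is essentially tautological: a subset $S\subseteq\{1,\dots,r\}$ determines an open face $F_S=\{u\in\R^r_\angle:u_j=0\Leftrightarrow j\in S\}$, which under $\varphi$ maps to polynomials whose root multiplicity pattern is given by the maximal runs of consecutive indices in $S$, the ordered partition of $r+1$ in question. This is exactly the stratification of $\overline\Delta$ inherited from $\mathcal P$ by coincidences of ordered roots. For (iii), on the interior $F_S^\circ$ the distinct roots are affine-linear functions of the free coordinates $u_j$, $j\notin S$, and the passage from distinct roots (with prescribed multiplicities summing to $r+1$) to the coefficients of the polynomial is a smooth immersion, which follows from the non-vanishing of a (generalized) Vandermonde-type Jacobian. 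Inversely, on the corresponding stratum of $\overline\Delta$, the implicit function theorem applied to the factorization $p(x)=\prod(x-r_\ell)^{m_\ell}$ shows that the distinct roots depend smoothly on the coefficients, hence $\varphi^{-1}$ is smooth on each stratum. The main technical point I expect is verifying the non-degeneracy of the Jacobian on strata, i.e.\ that the map from distinct roots (with fixed multiplicities, subject to the linear constraint $\sum m_\ell r_\ell=0$) to coefficients of $p$ is a local diffeomorphism; this is a variant of the Vandermonde argument and should be routine once the combinatorial bookkeeping of strata is set up.
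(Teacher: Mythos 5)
Your proposal is correct and follows essentially the same route as the paper: the same map $\varphi$ (ordered roots reconstructed affinely from the consecutive differences $u_j$ under the zero-sum constraint, then passed through the elementary symmetric functions), the same tautological matching of faces of $\R^r_\angle$ with root-multiplicity strata, and the same technical crux, namely the non-degeneracy of the (confluent) Vandermonde-type Jacobian of the roots-to-coefficients map restricted to each stratum, which the paper carries out explicitly via the Jacobi matrix of the elementary symmetric function map and which you correctly identify as the routine remaining computation.
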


\begin{proof}
For any point $(u_1,\dots,u_r) \in \R^r$ define real numbers $x_1,\dots,x_{r+1}$ in such a way that $x_{j+1}-x_j=u_j$ for all $j=1,\dots,r$ and $\sum_{j=1}^{r+1} x_j=0$; denote the resulting (injective linear) map $\mathbf u = (u_1,\dots,u_r) \mapsto \mathbf x = (x_1,\dots,x_{r+1})$ by $A$. Using the identification of Lemma \ref{lemma:polyroots} $b)$, define $\varphi: \R_\angle^r \to \overline \Delta$ by sending $(u_1,\dots,u_r)$ to the polynomial that has roots $x_1,\dots,x_{r+1}$. This map is clearly a homeomorphism, we only need to show that $\varphi$ maps strata onto strata diffeomorphically.
\par
For computational reasons, we extend $\varphi$ to a map $\Phi: \R^{r+1} \to \R^{r+1}$ by mapping the point $(u_1,\dots,u_r,\delta)$ to the coefficients of the monic polynomial with roots $x_1+\frac{1}{r+1}\delta,\dots,x_{r+1}+\frac{1}{r+1}\delta$. Then $\varphi(\mathbf u)$ can be identified with $\Phi(\mathbf u,0)$ and to prove our claim it is enough to show that $\Phi$ is a diffeomorphism from the strata of $\R^r_\angle \times \R$ onto their images. It is clear that $\Phi$ maps the strata of $\R^r_\angle \times \R$ homeomorphically onto their images, we only need to show that the rank of derivative at any point of an $s$-dimensional stratum is $s$.
\par
To do this, we write $\Phi$ as the composition
$$
\Phi=E \circ \tilde A,
$$
where the linear map $\tilde A$ is defined as
$$
\tilde A (\mathbf u,\delta) = -A\mathbf u - \frac{1}{r+1}(\delta,\dots,\delta)
$$
and $E : \R^{r+1} \to \R^{r+1}$ is the map whose $j$th coordinate function is the $j$th elementary symmetric function:
$$
E(x_1,\dots,x_{r+1})= \big( e_1(x_1,\dots,x_{r+1}),\dots,e_{r+1}(x_1,\dots,x_{r+1})\big).
$$
The map $\tilde A$ is a linear isomorphism, composing with it does not change the rank of the differential, therefore it is enough to show that the differential of $E$ has maximal rank when restricted to the strata of $\tilde A(\R^r_\angle\times \R)$. Note that these strata have the form
\begin{align*}
S(a_1,\dots,a_{s-1}) = \{ (x_1,\dots,x_{r+1}) : \quad & x_1 = \dots = x_{a_1-1} < \\
< & x_{a_1} = \dots = x_{a_2-1} <\\
< & \dots <  x_{a_{s-1}} = \dots =x_{r+1} \}
\end{align*}
for some $s\geq 1$ and an index set $1 < a_1 < a_2 < \dots < a_{s-1} \leq r+1$ (so that in particular $\dim S(a_1,\dots,a_{s-1}) =s$).
\par
The following claim is easily proved by induction on $r$:
\begin{claim}\label{claim:Jacobi}
The Jacobi matrix of $E$ at the point $\mathbf x = (x_1,\dots,x_{r+1})$ is 
$$
J(\mathbf x)=
\begin{bmatrix}
1&\dots&1&\dots&1\\
e_1(x_2,\dots,x_{r+1}) & \dots &  e_1(\dots,x_{j-1},x_{j+1},\dots)  & \dots & e_1(x_1,\dots,x_{r}) \\
e_2(x_2,\dots,x_{r+1}) & \dots &  e_2(\dots,x_{j-1},x_{j+1},\dots)  & \dots & e_2(x_1,\dots,x_{r}) \\
\vdots & & \vdots &  & \vdots\\
e_r(x_2,\dots,x_{r+1}) & \dots &  e_r(\dots,x_{j-1},x_{j+1},\dots)  & \dots & e_r(x_1,\dots,x_{r}) \\
\end{bmatrix}.
$$
Its determinant is $\prod_{1\leq i<j \leq r+1} (x_i-x_j).$
\end{claim}
In particular, this matrix is nondegenerate if the $x_j$ are pairwise different. To estimate the rank of the differential of $E$ restricted to $S(a_1,\dots,a_{s-1})$ at a point $\mathbf x \in S(a_1,\dots,a_{s-1})$, notice that the columns of $J(\mathbf x)$ with indices $i$ and $j$ coincide if $x_i=x_j$, therefore $\rk dE(\mathbf x) \leq s$. On the other hand, the columns with indices $1$, $a_1$, $\dots$, $a_{s-1}$ are linearly independent -- the minor formed by the first $s$ rows of these columns can be calculated to be nonzero in the same way as in Claim \ref{claim:Jacobi}. Hence $\rk dE(\mathbf x) =s$ and we get that the kernel of $dE(\mathbf x)$ is spanned by the vectors $\mathbf u_i-\mathbf u_j$ for those $i$ and $j$ for which $x_i=x_j$, where $\mathbf u_l$ denotes the $l$th unit coordinate vector. All these vectors are orthogonal to $S(a_1,\dots,a_{s-1})$, consequently their span is also transverse to $S(a_1,\dots,a_{s-1})$ and hence the restriction of $E$ to $S(a_1,\dots,a_{s-1})$ has full rank because its differential is the restriction of the differential $dE$ to the tangent space of $S(a_1,\dots,a_{s-1})$.
\end{proof}

\begin{figure}
\centering
\resizebox{10cm}{!}{\input{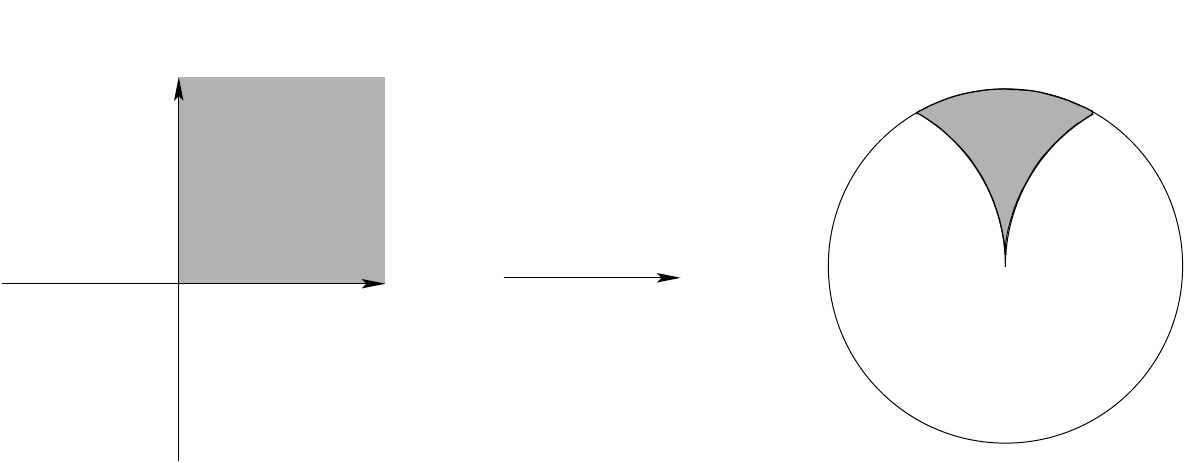_t}}
\caption{The homeomorphism $\varphi$ for $r=2$.}
\label{fig:cusp}
\end{figure}

\par
We want to show that $\overline\Delta$ has a (homotopically) canonical parallelization. Clearly any homeomorphism $\varphi$ that is a diffeomorphism on the strata does give a parallelization induced from $\R^{r}_\angle$. But there are many possible choices of $\varphi$ (even up to isotopy), and to obtain a parallelization along the whole $\overline\Delta$ that does not depend on the choice of local coordinates we need to show that there is a canonical (unique up to isotopy) such choice. For this purpose it is enough to show that there is a canonical ordering of the $1$-dimensional edges of $\overline \Delta$, because if $\varphi$ respects the ordering of the edges (and it can be chosen to do so), then it is isotopically unique. We shall consider $\sigma_r$ as a prim map, namely the projection of the immersion $(t_1,\dots,t_{2r-1},x) \mapsto (\sigma_r(t_1,\dots,t_{2r-1},x),x) \in \R^{2r+2}$. This gives an ordering of the preimages of multiple points: for an $(r+1)$-tuple point with preimages $a_1,\dots,a_{r+1}$ we assume that the indexing is such that $i<j$ if and only if $x(a_i)<x(a_j)$. If we choose a sequence $(q_n) \in \Delta$ converging to a point $\overline q \in \overline \Delta \setminus 0$ that lies on a $1$-dimensional edge, then the $\sigma_r$-preimages of the $(r+1)$-tuple points $\sigma_r^{-1}(q_n)=\{ a_1^{(n)}, \dots, a_{r+1}^{(n)} \}$ degenerate in the limit in the sense that there is an integer $s$, $1 \leq s \leq r$ and there are two different points $\underline a$ and $\overline a$ with $x(\underline a)<x(\overline a)$ such that $\underset{n \to \infty}{\lim} a_i^{(n)} = \underline a$ for $i=1,\dots,s$ while $\underset{n \to \infty}{\lim} a_i^{(n)} = \overline a$ for $i=s+1,\dots,r+1$. Hence to each edge we can associate an integer $s$, $1\leq s \leq r$, and thus we obtain an ordering of the edges of $\overline \Delta$. We choose the map $\varphi$ in such a way that it respects the ordering of the edges.
\par
Note that any automorphism of $\sigma_r$ as a prim map keeps the ordering of the edges of $\overline \Delta$. We call a pair $(\alpha,\beta)$ of germs of diffeomorphisms $\alpha: (\R^{2r},0) \to (\R^{2r},0)$ and $\beta: (\R^{2r+1},0) \to (\R^{2r+1},0)$ a prim automorphism of the prim germ $\sigma_r = \pi \circ \hat\sigma_r: (\R^{2r},0) \looparrowright (\R^{2r+2},0) \to (\R^{2r+1},0)$ if in addition to being an automorphism of $\sigma_r$ (that is, $\sigma_r \circ \alpha = \beta \circ \sigma_r$) it preserves the selected orientation of $\ker \mathrm{d}\sigma_r(0)$, i.e. the partial derivative of the $2r$th coordinate function of $\alpha$ with respect to $x$ is positive (recall that $\frac{\partial \sigma_r}{\partial x}(0) =0$).
\par
Let now $g: M^n \looparrowright \R^{n+2}$ be an immersion, where $M$ is a compact oriented $n$-dimensional manifold. Let $f$ be the prim map $f=\pi\circ g$, with $\pi: \R^{n+2} \to \R^{n+1}$ the projection that omits the last coordinate $x_{n+2}$ in $\R^{n+2}$. Suppose that $f$ is a $\Sigma^{1_r}$-map, that is, it has no $\Sigma^{1_j}$ points for $j > r$. The set of $\Sigma^{1_r}$-points of $f$ will be denoted by $\Sigma^{1_r}(f)$, or $\Sigma$ for brevity. It is a submanifold of $M$ of dimension $n-2r$. Its image $f(\Sigma^{1_r}(f))$ (denoted by $\tilde \Sigma$) is an immersed submanifold in $\R^{n+1}$ and has codimension $2r+1$. We claim that $\tilde \Sigma$ has a natural normal framing, unique up to homotopy. To simplify the presentation of the proof, we will suppose that $\tilde \Sigma$ is embedded into $\R^{n+1}$.
\par
There is an embedding $\theta: \Sigma \times (-\varepsilon, \varepsilon) \hookrightarrow \R^{n+1}$ (for some small positive number $\varepsilon$), unique up to isotopy, such that
\begin{itemize}
\item $\theta(x,0)=f(x)$ for all $x\in \Sigma$,
\item $\theta(x,t)$ is an $(r+1)$-tuple point of $f$ for all $0<t<\varepsilon$.
\end{itemize}
Choose any $t^* \in (0, \varepsilon)$ and denote by $\Sigma^*$ the image set $\theta(\Sigma\times\{t^*\})$. Clearly it is enough to give a canonical normal framing of $\Sigma^*$. Let $\Delta_{r+1}(f)$ denote the set of $(r+1)$-tuple points of $f$ that lie in a tubular neighbourhood of $\tilde \Sigma$ and let $\overline{\Delta_{r+1}(f)}$ be its relative closure. There is a fibration $\overline{\Delta_{r+1}(f)} \to \Sigma^*$ with fiber $\overline \Delta$.
\par
The normal bundle of $\Sigma^*$ in $\R^{n+1}$ is the sum of its normal bundle in $\Delta_{r+1}(f)$ and the restriction of the normal bundle of $\Delta_{r+1}(f)$ in $\R^{n+1}$ to $\Sigma^*$. The latter bundle is trivial, because it is the sum of the trivial normal line bundles of the $(r+1)$ non-singular branches of $f$ that intersect at the points of $\Delta_{r+1}(f)$ (and these branches have a canonical ordering by the last coordinate of $g$). The former bundle is trivial because there exists a canonical parallelization of $\overline \Delta$ that is (homotopically) invariant under the prim automorphisms of $\sigma_r$.
\par
In general, when $\tilde \Sigma$ is not embedded, we need to consider only the preimages of multiple points that are close to the singular points in the source manifold to make the same argument work.
\par
Alternatively, one can notice that the structure group of the bundle $\overline{\Delta_{r+1}(f)} \overset{\overline\Delta}{\to} \Sigma^*$ can be reduced to the maximal compact subgroup of the group of automorphisms of $\sigma_r$ that also respect the orientation of the kernel of $d\sigma_r$, and that group is trivial \cite{RSz}. Consequently the normal bundle of $\Sigma^*$ in $\Delta_{r+1}(f)$ has a homotopically unique trivialization. We have seen above that the normal bundle of $\Delta_{r+1}(f)$ admits a canonical trivialization as well. Therefore we get a (homotopically unique) normal framing of $\Sigma^*$.

\section*{Appendix 3: Proof of the Compression Theorem}

Throughout the proof we will assume that all the vector fields $v_j$ have unit length and are pairwise orthogonal. To see that this can be achieved, assume that in the process of the proof an isotopy $\wt\Phi_t$ has already been constructed but it does not satisfy our assumption; we will now correct it to make it preserve the lengths and the pairwise orthogonality of the fields $v_j$. Let $A(t,p)$ for all $t\in \R$ and $p\in i(M)$ be the (unique) linear transformation of the linear space $N(t,p) = \langle d\wt \Phi_t (v_1(p)), \dots,d\wt\Phi_t (v_k(p)) \rangle$ that sends the base $d\wt \Phi_t (v_1(p))$, $\dots$, $d\wt\Phi_t (v_k(p))$ to the orthonormed base obtained from it by the Gram-Schmidt orthonormalization process. There exists an orthonorming isotopy $\mathscr O_t$ of $\R^n \times \R^k$ that fixes $i(M)$ and the complement of a tubular neighbourhood of $i(M)$ pointwise such that for all $p\in i(M)$ it maps a small disk of the affine space $p + N(0,p)$ into the same affine space by the affine transformation $d\wt \Phi_t^{-1}|_{\Phi_t(p)} \circ A(t,p) \circ d\wt\Phi_t|_p$. Defining $\Phi_t = \wt\Phi_t \circ \mathscr O_t$ yields the claimed isotopy. It is easy to check that this construction can be performed in such a way as to respect the extra requirements of statements $b)$ and $d)$ of Theorem \ref{thm:multicompression} -- the details are mentioned at the corresponding points of the proof. Repeating this process every time a new isotopy is introduced ensures that the fields $v_j$ stay pairwise orthogonal throughout the construction.

We first address the {\bf case $k=1$}, when we have an embedding $i : M \hookrightarrow \R^n \times \R^1$. For brevity the unit normal vector field $v_1$ will be denoted by $v$, the vector $\mathbf e_1$ will be denoted by $\up$ and $\down$ will be the vector $-\mathbf e_1$. We think of the direction of $\mathbf 0 \times \R^1$ as vertical, and those in $\R^n \times 0$ as horizontal. We may and will assume that $v$ is perpendicular to $i(M)$ (but it will not remain so during the constructed isotopy).

{\bf Case $k=1$, $n>m$.}
\par
{\bf Step $1$:} construct a diffeomorphism $\Psi \in \Diff(\R^n \times \R^1)$ in order to obtain the reparametrization $i' = \Psi \circ i$ such that
\begin{itemize}
\item the image $d\Psi(v)$ of $v$ under the differential of $\Psi$ -- we denote it by $w$ -- is orthogonal to $i'(M)$ and \emph{grounded}, that is, $w$ is never directed~parallel to $\down$;
\item $\Psi$ is the identity outside a compact subset of $\R^n \times \R^1$.
\end{itemize}
We perform the construction by taking the flow $\Psi_t$ of a vector field $\bm \theta$ on $\R^n \times \R^1$ and setting $\Psi=\Psi_\varepsilon$ for a suitably chosen $\varepsilon>0$; we start by constructing $\bm \theta$. Since $m<n$, the image of $v$ in the unit sphere $S^n$ has measure zero. Consequently, for almost every horizontal direction $\mathbf h \in S^{n-1} \hookrightarrow S^n$ the arc $\{  \cos \lambda \cdot \down + \sin \lambda \cdot \mathbf h : \lambda \in [0,\pi] \}$ intersects the image of $v$ in a null set according to the $1$-dimensional Lebesgue measure. Taking such an $\mathbf h$, let $R$ denote the rotation of $\R^n\times\R^1$ that turns $\mathbf h$ into $\down$ and is the identity on the orthogonal complement of the $2$-dimensional linear subspace spanned by $\down$ and $\mathbf h$. Let the vector field $\bm \theta$ be equal to the infinitesimal generator of $R$ in a tubular neighbourhood $V$ of $i(M)$ and extend it arbitrarily to a smooth vector field on $\R^n\times\R^1$ that vanishes outside a compact set. For all sufficiently small $\varepsilon >0$ the time-$\varepsilon$ flow of the vector field $\bm \theta$ will keep the points of $i(M)$ within the tubular neighbourhood $V$, hence the vector field $v$ will be moved to $R^\varepsilon v$ and for almost all such $\varepsilon$ the rotated vector field $R^\varepsilon v$ will miss $\down$. Note that the diffeomorphism $\Psi$ preserves the orthogonality of $v$ to the tangent space of $M$ (i.e. $w \perp i'(M)$).
\par
{\bf Step $2$:} denote by $b$ the inner bisector vector field (along $i'(M)$) of $\up$ and $w$. Note that
\begin{enumerate}[a)]
\item $b$ nowhere belongs to the tangent bundle $T(i'(M))$;
\item the scalar product $\langle b,\up \rangle$ is everywhere positive.
\end{enumerate}
In this step we construct an isotopy $\Xi_t \in \Diff(\R^n \times \R^1)$, $t \in \R$ such that
\begin{enumerate}[(i)]
\item $\Xi_0$ is the identity;
\item for some $t_0>0$ the image $d\Xi_{t_0}(b)$ of $b$ under $d\Xi_{t_0}$ is $\up$;
\item $\Xi_{t_0+t}(x)=\Xi_{t_0}(x)+t\cdot\up$ for all $t>0$ and for all $x\in \R^n\times\R^1$.
\end{enumerate}
We proceed like in \cite{compression}, by extending the vector field $b$ given on $i'(M)$ to a vector field $\mathbf b$ on $\R^n\times \R^1$ in such a way that $\mathbf b$ also satisfies $\langle \mathbf b, \up \rangle > \delta > 0$ everywhere for some $\delta>0$ and $\mathbf b = \up$ holds outside a tubular neighbourhood $V'$ of $i'(M)$. Let $\wt \Xi_t$ be the $1$-parameter family of diffeomorphisms generated by the vector field $\mathbf b$; note that $\wt \Xi_t$ satisfies properties (i) and (ii), while property (iii) holds for $x\in i'(M)$ but not everywhere else. In order to define $\Xi_t$ that also satisfies property (iii) we choose a function $\varphi: [0,\infty) \to [0,1]$ such that
\begin{itemize}
\item $\varphi(t)=1$ for all $t$ for which $\wt \Xi_t(i'(M)) \cap V' \neq \emptyset$, and
\item $\varphi(t) = 0$ for all $t\geq t_0$ for some $t_0>0$.
\end{itemize}
We define $\Xi_t$ to be the $1$-parameter family of diffeomorphisms generated by the vector field $\mathbf{\hat b}_t = \varphi(t) \cdot \mathbf b + (1-\varphi(t))\cdot \up$.

Since in a neighbourhood of $\wt\Xi_t(i'(M))$ the vector fields $\mathbf b$ and $\mathbf{\hat b}_t$ coincide, so do the flows $\wt\Xi_t$ and $\Xi_t$ in a neighbourhood of $i'(M)$. This ensures that $\Xi_t$ still satisfies properties (i) and (ii), and property (iii) holds by the definition of $\mathbf{\hat b}_t$.
\par
{\bf Step $3$:} construct another isotopy $\Theta_t \in \Diff(\R^n\times\R^1)$, $t \in [0,1]$ (with $\Theta_0$ the identity) that keeps $i'(M)$ fixed pointwise and turns the vector $w$ into $b$. First, we set $w_\tau$, $\tau\in [0,1]$ to be the vector field on $i'(M)$ obtained by smoothly rotating $w=w_0$ into $b=w_1$ along the shortest arc connecting them, staying constant in neighbourhoods of $\tau=0$ and of $\tau=1$, respectively. Since $w$ is orthogonal to $i'(M)$ and $b$ forms an acute angle with $w$, the vector field $w_\tau$ is not tangent to $i'(M)$ for any $\tau\in [0,1]$. Consequently the map
\begin{align*}  
I_\tau: M \times [0,1] & \to \R^{n+1} \\
(p,s) & \mapsto i'(p)+\varepsilon_\tau \cdot s \cdot w_\tau\left(i'(p)\right)
\end{align*}
is an embedding for every $\tau$ for some sufficiently small $\varepsilon_\tau > 0$.
\par
By compactness of the interval $[0,1]$ there is a common $\varepsilon>0$ such that $I_\tau$ is an embedding for all $\tau$ with the choice $\varepsilon_\tau= \varepsilon$. Then $I_\tau$ is an isotopy of embeddings of the cylinder $M \times [0,1]$ as $\tau$ changes from $0$ to $1$. This isotopy can be extended to an isotopy of $\R^n\times\R^1$, which we denote by $\Theta_t$; since $w_\tau$ stayed constant in a neighbourhood of $\tau=1$, we may assume that $\Theta_t$ also stays constant in a neighbourhood of $t=1$. The isotopy $\Theta_t$ can then be extended to all times $t \in [0,\infty)$ by putting $\Theta_t=\Theta_1$ for $t>1$.
\par
{\bf Step $4$:} we compose the isotopies constructed in the previous steps:
$$\Phi_t = \Psi^{-1} \circ \Xi_t \circ \Theta_t \circ \Psi.$$
We claim that for $t$ sufficiently big this isotopy will turn the vector field $v$ into the constant vector field $\up$. Indeed, $\Psi$ sends $v$ to $w$; for $t$ sufficiently big $\Theta_t$ sends $w$ into $b$; and $\Xi_t$ sends $b$ into $\up$. Additionally, $\Xi_t \circ \Theta_t |_{i'(M)} = \Xi_t|_{i'(M)}$ moves the image of $M$ at a constant nonzero velocity for $t>t_0$. In finite time it leaves the compact set where $\Psi$ differs from the identity map. Therefore $\Psi^{-1}$ is the identity on the image of $\Xi_t \circ \Theta_t \circ \Psi \circ i$ for $t$ big enough. That is, the image of $v$ under $\Phi_t$ is eventually $\up$.
\par
We need to show that the map $(p,t) \mapsto \Phi_t(i(p))$ is an immersion of $M \times [0,\infty)$ into $\R^n \times \R^1$. Let $p\in M$ be an arbitrary point and consider the image of the ray $p\times [0,\infty)$: it is the curve $\Psi^{-1}\left(\Xi_t\left(\Theta_t \left(\Psi\left(i(p)\right)\right)\right)\right) = \Psi^{-1}\left(\Xi_t\left(\Theta_t \left(i'(p)\right)\right)\right)$. The isotopy $\Theta_t$ keeps $i'(p)$ fixed, and the curve $\Xi_t(i'(p))$ ($= \wt\Xi_t(i'(p))$) is the trajectory of the point $i'(p)$ under the (time-independent) flow of the vector field $\mathbf b$. The derivative of this trajectory at the starting point $i'(p)$ is $b(p)$, a vector linearly independent of $T_{i'(p)}i'(M)$, hence the same independence condition holds for the images under $\Xi_t$ at any time $t>0$. A final composition with the diffeomorphism $\Psi^{-1}$ keeps the condition intact.
\par
{\bf Case $k=1$, $n=m$.} Since $M$ has no closed components, there is an $(n-1)$-dimensional compact subcomplex $K$ in $M$ such that for any neighbourhood $U$ of $K$ there is an isotopy $\alpha^U_\tau$, $\tau \in [0, 1]$ of $M$ that deforms $M$ into $U$, i.e. $\alpha^U_0 = id_M$ and $\alpha^U_1(M) \subset U$. The neighbourhood $U$ will be chosen later and $\alpha_\tau$ will denote the isotopy corresponding to that choice of~$U$.
\par
We consider the compressing procedure (turning $v$ to $\up$) of the case ${n>m}$ on the subcomplex $K$. The result is an isotopy $\hat \Phi_t$ such that for some $t_0$ and for all $t\geq t_0$
\begin{itemize}
\item $d\hat\Phi_t(v) = \up$ holds on $K$;
\item $\hat\Phi_t(p) = \hat\Phi_{t_0}(p)+(t-t_0) \cdot \up$ holds for all $p\in K$;
\item for a neighbourhood $\mathcal N$ of $K$ in $M$ the restriction of the isotopy to the image $i(\mathcal N)$ is an immersion of the infinite cylinder $\mathcal N \times [0,\infty)$.
\end{itemize}
Since on $K$ the image $d\hat\Phi_{t_0}(v) = \up$ does not belong to the tangent space $T\hat\Phi_{t_0}(i(M))$, there is neighbourhood $U\subset \mathcal N$ of $K$ in $M$ such that in the closure of $U$ the shortest arc connecting the image $d\hat\Phi_{t_0}(v|_{i(\overline U)})$ with $\up$ does not intersect $T\hat\Phi_{t_0}(i(M))$. This neighbourhood $U$ will be the one defining $\alpha_\tau$. We also consider the following extension $\overline \alpha_\tau$ of $\alpha_\tau$ to the cylinder $M \times [0,1]$ in the ambient space $\R^n \times \R^1$:
\begin{align*}
\overline\alpha_\tau(p,s)& = i(\alpha_\tau(p))+ \varepsilon \cdot s \cdot v(i(\alpha_\tau(p))) \quad\text{for }p\in M,\ s\in [0,1]
\end{align*}
for an $\varepsilon>0$ small enough so that $\overline \alpha_0$ (and consequently every $\overline \alpha_\tau$) is an embedding, and extend it to an isotopy $\wt \alpha_\tau$ of $\R^n \times \R^1$. It can be supposed that $\wt \alpha_\tau$ is the identity outside a compact set $B \subset \R^n\times \R^1$. By increasing $t_0$ if needed we may also assume that for all $t\geq t_0$ we have $\hat\Phi_t(i(M)) \cap B = \emptyset$.
\par
In the neighbourhood $U$ we can again repeat the argument of Step $3$ to obtain an isotopy $\Lambda_t$, $t\geq 0$ of the identity of $\R^n\times \R^1$ that fixes $i(M)$ pointwise, turns $v$ into $d\hat\Phi_{t_0}^{-1}(\up)|_{i(\overline U)}$ and stays constant $\Lambda_t = \Lambda_{t_0}$ for $t\geq t_0$. We can now define the isotopy
$$
\Phi_t = \wt\alpha_1^{-1} \circ \hat\Phi_t \circ \Lambda_t \circ \wt\alpha_1.
$$
This isotopy will send $v$ to $\up$ for $t \geq t_0$ (for $t \geq t_0$ the image $\hat\Phi_t(i(M))$ is disjoint from $B$, hence $\wt\alpha_1^{-1}$ is the identity near $\hat\Phi_t(i(M))$), and on $i(M)$ we have that $\Phi_t = \wt\alpha_1^{-1} \circ \hat\Phi_t \circ \Lambda_t \circ \wt\alpha_1 = \wt\alpha_1^{-1} \circ \hat\Phi_t \circ \wt\alpha_1$ is a reparametrization of $\hat\Phi_t|_{i(U)}$, hence the restriction of the obtained isotopy to $i(M)$ is an immersion of the cylinder $M \times [0,\infty)$ as required.
\par
{\bf Case $k>1$.} Here we perform the construction in two steps, first straightening out the vector field $v_1$ as detailed above both when $n=m$ and when $n>m$, and then utilizing different corollaries of the original Rourke-Sanderson compression theorem appropriate for the cases $n=m$ and $n>m$ to straighten the rest of the normal vector fields. In the first step we obtain an isotopy $\Phi^{(0)}_t$ such that for some $t_0$ and all $t>t_0$ we have $\Phi^{(0)}_t(p)=\Phi^{(0)}_{t_0}(p)+(t-t_0) \mathbf e_1$ for all $p\in \R^n\times\R^k$ and $d\Phi^{(0)}_t(v_1)=\mathbf e_1$, while the restriction of the isotopy to $i(M)$ is an immersion of the cylinder $M\times [0,\infty)$. Then we apply \cite[Multi-compression Theorem 4.5]{compression} when $n>m$ and \cite[Addendum (v) to Multi-compression Theorem 4.5]{compression} when $n=m$ to the composition $\Phi^{(0)}_{t_0} \circ i$ and obtain an isotopy $\Phi^{(1)}_t$, $t\in [0,t_1]$, such that it straightens the images of all the vector fields $v_1$, $\dots$, $v_k$. By construction the isotopy $\Phi^{(1)}_t$ is a lift of a $C^0$-small regular homotopy $\check\Phi_t$ of the projection of $\Phi^{(0)}_{t_0}(i(M))$ parallel to $\mathbf e_1$, hence we can choose this lift to be locally of the form $\check\Phi_t \times id_{\langle \mathbf e_1\rangle}$ and consequently preserve the first coordinate function as well as the $v_1$ direction.
\par
By reparametrizing time we may assume that $\Phi^{(1)}_t$ can be extended smoothly to all $t\in \R$ as the identity for all $t<0$ and as a time-independent diffeomorphism for all $t>t_0$, and by multiplying the time-dependent generating vector field $\frac{\partial \Phi^{(1)}_t}{\partial t}$ by an appropriate bump function we may also assume that $\Phi^{(1)}_t$ only moves points within a compact subset of $\R^n\times\R^k$.
\par
We define
\begin{equation}\label{eq:defPhi}\tag{$\diamondsuit$}
\Phi_t(p)=\begin{cases}\Phi^{(0)}_t(p) &\text{ if } t\leq t_0,\\\Phi^{(1)}_{t-t_0}(\Phi^{(0)}_{t_0}(p))+(t-t_0) \mathbf e_1 &\text{ if } t\geq t_0.\end{cases}
\end{equation}
This is an isotopy that straightens all the vector fields $v_1$, $\dots$, $v_k$, we only need to check that its restriction to $i(M)$ is an immersion of the cylinder $M\times [0,\infty)$. For times $t\leq t_0$ this is already proven above in the $k=1$ case. For $t\geq t_0$ we have
$$
\frac{\partial \Phi_t(p)}{\partial t} = \frac{\partial \Phi^{(1)}_{t-t_0}(\Phi^{(0)}_{t_0}(p))}{\partial t} + \mathbf e_1,
$$
and this is not in the tangent space of $\Phi_t(i(M))$ exactly if the image of $\frac{\partial \Phi^{(1)}_{t-t_0}(\Phi^{(0)}_{t_0}(p))}{\partial t}$ avoids $-\mathbf e_1 + d\Phi_t(di(TM))$. This latter condition can be achieved by a linear reparametrization of time in $\Phi^{(1)}_t$: we can slow down $\Phi^{(1)}_t$ by a sufficiently small factor $\varepsilon>0$ so that the length of the derivative $\frac{\partial \Phi^{(1)}_{\varepsilon t}}{\partial t}$ becomes always less than the minimum of the distance between $\mathbf e_1$ and $\im d\Phi_t$ on $\Phi^{(0)}_{t_0}(i(M))$ -- this latter is positive since the projection of $\Phi^{(1)}_t$ parallel to the $\mathbf e_1$ direction is a regular homotopy of the projection of $\Phi^{(0)}_{t_0}(i(M))$.
\par
For part $b)$ of Theorem~\ref{thm:multicompression} we proceed in the same way: first we straighten the vector field $v_{\hat\jmath}$ for some $\hat\jmath\not\in I$ (if $I=\{1,\dots,k\}$, then we are already done). Next we apply \cite[Addendum (vi) to Multi-compression Theorem 4.5]{compression} to the projection of $\Phi^{(0)}_{t_0}(i(M))$ parallel to the linear span $\langle \mathbf e_h: h\in I \cup \{ \hat\jmath \}\rangle$ and obtain a regular homotopy of this projection. We choose an isotopy lift $\Phi^{(1)}_t$ of the obtained regular homotopy that keeps coordinates with indices in $I \cup \{\hat\jmath \}$ fixed and combine it with the constant speed shift in the $\mathbf e_{\hat\jmath}$ direction as given by \eqref{eq:defPhi}. This yields an isotopy $\Phi_t$ that straightens all the vector fields $v_1$, $\dots$, $v_k$ and keeps the coordinates that belong to $I$ fixed. Therefore we only need to perform the straightening of the vectorfield $v_{\hat\jmath}$ in a manner that satisfies the requirements of part $b)$ of the theorem. In order to keep unchanged the coordinates that belong to $I$, all the vector fields that generate the isotopies forming the final isotopy $\Phi_t$ will be tangent to the orthogonal complement of the linear span of $\mathbf e_j$, the linear space $W=W_I = \langle \mathbf e_i : i\in I \rangle^\perp$:
\begin{itemize}
\item In Step $1$, the vector field $v_j$ lies in the $(n+k-\vert I\vert-1)$-dimensional unit sphere of the linear space $W$, and we can make it miss the $\down$ direction by a small move parallel to $W$.
\item Step $2$ can be performed to yield a vector field $\mathbf b$ parallel to $W$ since $v_j$ and its target state $\mathbf e_j$ are both parallel to $W$.
\item Step $3$ can be performed so that the isotopy $\Theta_t$ preserves parallel translates of $W$ since both $v_{\hat\jmath}$ and $b$ are parallel to $W$.
\item The orthonorming isotopy $\mathscr O_t$ mentioned in the beginning of Appendix $3$ is constructed as an extension of an isotopy that acts affinely on normal disks of $i(M)$. Each of these affine transformations fixes a point in $i(M)$ and has a linear part that is a composition of the differential of $\Phi_t$, its inverse map and the maps $A(t,p)$; all of these linear maps preserve $W$ if the Gram-Schmidt orthogonalization process is run on the $v_j$ with $j\in I$ first. Hence $\mathscr O_t$ is an extension of an isotopy that preserves translates of $W$ and can therefore be chosen to preserve translates of $W$ itself. Consequently the corrected isotopy $\Phi_t \circ \mathscr O_t$ will preserve translates of $W$ as well.
\end{itemize}
If, in the course of the proof, instead of $di(T_pM)$ we use its orthogonal projection onto $W$, $(di(T_pM) + W^\perp) \cap (W + i(p))$, then not only will the map $(p,t) \mapsto \Phi_t(i(p))$ be an immersion of $M\times [0,\infty)$, but the map
$$\left(p,(x_j)_{j\in I},t\right) \mapsto \Phi_t(i(p))+\sum_{j\in I}x_j \cdot v_j
$$
will be an immersion of $M \times D^{\vert I \vert}_\varepsilon \times [0,\infty)$ for some sufficiently small positive $\varepsilon$.
\par
The {\bf relative version} of Theorem~\ref{thm:multicompression} only requires substantial adaption of Steps $1$ and $2$ of the proof -- in Step $3$, we only have to additionally require that $\Theta_t$ is the identity on $L\times \R^k$ at all times, and in the construction of $\mathscr O_t$ (see the beginning of Appendix $3$) we have to require that $\mathscr O_t$ should be the identity on a neighbourhood of $L \times \R^k$. We shall again consider only the field $v_1$, which we denote by~$v$. If the vector field $v$ is grounded ($v \neq \down$ everywhere) then Step $1$ can be skipped (setting $\Psi=id_{\R^n\times\R^1}$) and no further change in the proof is needed. If the subset $\{ p \in M : v(i(p)) = \down \}$ is non-empty, then we need to modify the definition of the vector field $\bm \theta$ in Step $1$ since its flow does not necessarily preserve $v$ on $L\times \R^1$. We choose $\bm \theta$ to satisfy the following for some positive $\delta < \pi/6$ ($\angle$ stands for angle):
\begin{itemize}
\item $\bm \theta$ is the infinitesimal generator of the rotation $R$ (chosen as in the ${m<n}$ case) on a neighbourhood of the set
$$\mathscr U=\{ p \in M : \angle (v(i(p)), \up) \geq \delta \};$$
\item $\bm \theta = \mathbf 0$ outside a compact set;
\item $\bm \theta = \mathbf 0$ in a neighbourhood of $L\times \R^1$.
\end{itemize}
While this $\bm \theta$ no longer preserves orthogonality, taking a sufficiently small $\varepsilon$ in the construction of $\Psi=\Psi_\varepsilon$ will make $\Psi$
\begin{itemize}
\item preserve orthogonality on $i(\mathscr U)$;
\item change direction of all vectors by less than $\delta$;
\item act as the identity in a neighbourhood of $L \times \R^1$.
\end{itemize}
Hence the image of $v$ under $\Psi$ (again denoted by $w$) will still miss $\down$ -- at points in $\mathscr U$ the diffeomorphism $\Psi$ is the fixed rotation $R^\varepsilon$ and at points outside $\mathscr U$ a rotation of an angle less than $\delta$ cannot turn them into the lower hemisphere $\{ z \in S^n : \langle z, \up \rangle <0 \}$. On $L\times \R^1$ the vector fields $v$ and $w$ coincide. We now define the vector field $b$ to be the inner bisector of $\up$ and $w$ as in Step $2$; once its property $a)$ is verified, the rest of the proof will proceed without change (property $b)$ holds trivially).
\par
To check condition $a)$ of Step $2$, first note that for any vector $w \neq \down$ the new vector $b$ forms an acute angle with $w$, hence the condition is satisfied on $\mathscr U$, where $w$ is orthogonal to $M$. Outside $\mathscr U$ the angle $\angle(w,\up)$ is less than $2\delta$ since $\Psi$ changed all angles by at most $\delta$. At these points the angle $\angle (b,w)$ is at most $\delta$, hence $b(i'(p))$ and $v(i(p))$ form an angle of at most $\delta+\delta = 2\delta$ for all $p \in M \setminus \mathscr U$. Since the tangent space of $i(M)$ was also rotated by $\Psi$ by an angle less than $\delta$, the vector field $b$ and $Ti'(M)$ form a strictly positive angle as required by condition $a)$ since we chose $\delta<\pi/6$.

\newpage
\small

\noindent
Authors addresses:

\noindent
Csaba Nagy\\
Department of Analysis\\
E\"otv\"os Lor\'and University (ELTE)\\
Budapest, P\'azm\'any P. s\'et\'any I/C\\
H-1117 Hungary\\
e-mail: csaba224@freemail.hu\\
\\
Andr\'as Sz\H{u}cs\\
Department of Analysis\\
E\"otv\"os Lor\'and University (ELTE)\\
Budapest, P\'azm\'any P. s\'et\'any I/C\\
H-1117 Hungary\\
e-mail: szucs@cs.elte.hu\\
\\
Tam\'as Terpai\\
A. R\'enyi Mathematical Institute\\
of the Hungarian Academy of Sciences\\
Budapest, Re\'altanoda u. 13--15\\
H-1053 Hungary\\
e-mail: terpai@renyi.mta.hu

\end{document}